\documentclass[11pt]{article}  
\usepackage{amsmath}
\usepackage{amssymb}
\usepackage{theorem}
\usepackage{euscript}
\usepackage{pstricks}
\usepackage{authblk}
\usepackage{verbatim}
\usepackage{hyperref}

\usepackage{ifthen}
\usepackage{xifthen}
\usepackage{xcolor}
\usepackage{fullpage}
\usepackage[english]{babel}
\usepackage{mathtools}
\usepackage{hyperref}
\usepackage{color}

\usepackage{hyperref}
\hypersetup
{
	colorlinks,
	citecolor=black,
	filecolor=black,
	linkcolor=black,
	urlcolor=magenta
}
\hypersetup{linktocpage} 

\newtheorem{theorem}{Theorem}[section]
\newtheorem{lemma}[theorem]{Lemma}

\numberwithin{equation}{section}
\newtheorem{proposition}[theorem]{Proposition}

\newtheorem{remark}{Remark}[section]
\numberwithin{equation}{section}

\def\ZZ{{\mathbb Z}}

\def\RRd{{\mathbb R}^d}

\def\CC{{\mathbb C}}
\def\CC{{\mathbb C}}
\def\DD{{\mathbb D}}
\def\NN{{\mathbb N}}
\def\RR{{\mathbb R}}

\def\Vv{{\mathbb P}}

\def\Vv{{\mathcal P}}

\def\RRd{{\mathbb R}^d}

\def\RRi{{\mathbb R}^\infty}

\def\RRi{{\mathbb R}^\infty}

\def\RRp{{\mathbb R}_+}
\def\RRip{{\mathbb R}^{\infty}_+}

\def\Dd{{\mathcal D}}

\def\Ii{{\mathcal I}}
\def\Jj{{\mathcal J}}

\def\Vv{{\mathcal V}}

\def\CC{{\mathbb C}}
\def\ZZ{{\mathbb Z}}
\def\NN{{\mathbb N}}
\def\RR{{\mathbb R}}

\def\FF{{\mathbb F}}
\def\EE{{\mathbb E}}

\def\RRd{{\mathbb R}^d}

\def\supp{\operatorname{supp}}
\def\dv{\operatorname{div}}

\newcommand{\bb}{{\boldsymbol{b}}}

\newcommand{\be}{{\boldsymbol{e}}}

\newcommand{\bk}{{\boldsymbol{k}}}

\newcommand{\bp}{{\boldsymbol{p}}}

\newcommand{\bs}{{\boldsymbol{s}}}

\newcommand{\bx}{{\boldsymbol{x}}}

\newcommand{\by}{{\boldsymbol{y}}}

\newcommand{\brho}{{\boldsymbol{\rho}}}
\newcommand{\bsigma}{{\boldsymbol{\sigma}}}

\newcommand{\balpha}{{\boldsymbol{\alpha}}}

\newcommand{\bbeta}{{\boldsymbol{\beta}}}
\newcommand{\bdelta}{{\boldsymbol{\delta}}}
\newcommand{\bgamma}{{\boldsymbol{\gamma}}}
\newcommand{\blambda}{{\boldsymbol{\lambda}}}

\newcommand{\bvarepsilon}{{\boldsymbol{\bvarepsilon}}}

\newcommand{\rd}{{\rm d}}

\newcommand{\tL}{{\tilde{L}}}
\newcommand{\tv}{{\tilde{v}}}
\newcommand{\tu}{{\tilde{u}}}

\newcommand{\norm}[2]{\left\|{#1}\right\|_{#2}}

\newcommand{\brab}[1]{\left\{#1\right\}}
\newcommand{\brac}[1]{\left(#1\right)}

\title{\sffamily { Piecewise-polynomial interpolations  and quadratures for parametric PDEs with  log-Laplace  random inputs}}

\author[a]{Dinh D\~ung}
\affil[a]{Information Technology Institute, Vietnam National University, Hanoi
	\protect\\
	144 Xuan Thuy, Cau Giay, Hanoi, Vietnam
	\protect\\
	Email: dinhzung@gmail.com}

\date{\today}
\date{\ttfamily  \today}
\begin{document}
\maketitle

\begin{abstract}	
	We establish  a sparsity in terms of $\ell_p$-summability and weighted $\ell_2$-summability for the coefficients of the Laguerre generalized piecewise-polynomial chaos expansion of solutions to parametric elliptic PDEs with log-Laplace random inputs. From the sparsity, we derive convergence rates for semi-discrete approximations with respect to parametric variables. These rates are valid for sparse-grid, piecewise-polynomial interpolations and the generated  quadratures, and  to related extended least-squares approximations and generated quadratures.
	
	\medskip
	\noindent
	{\bf Keywords and Phrases}: Parametric PDEs with log-Laplace random  inputs;  Laguerre generalized piece-wise polynomial chaos expansion; Sparsity;  Sparse-grid polynomial interpolation;  Sparse-grid quadrature; Least squares approximation.
	
	\medskip
	\noindent
	{\bf Mathematics Subject Classifications (2020)}: 60H35, 65C30, 65D32, 65N35, 41A25. 	
\end{abstract}

\section{Introduction}
\label{Introduction}

 Let  $D \subset \RRd$ be a bounded  Lipschitz domain. We  consider the homogeneous divergence-form diffusion elliptic equation 
\begin{equation} \label{ellip}
	- \dv (a\nabla u)
	\ = \
	f \quad \text{in} \quad D,
	\quad u|_{\partial D} \ = \ 0. 
\end{equation}
Denote by $V:= H^1_0(D)$ the energy space and $V' = H^{-1}(D)$ the dual space of $V$. Throughout this paper, we assume that  $f \in V'$ and  $a \in L_\infty(D)$. Assume that  $a$ satisfies the ellipticity assumption
	\begin{equation} \nonumber
		0<\underset{\bx\in D}{\operatorname{ ess inf}}\,a(\bx) \leq a \leq  \underset{\bx\in D}{\operatorname{ ess sup}}\, a(\bx)<\infty.
	\end{equation}
By the well-known Lax-Milgram lemma, there exists a unique weak 
solution $u \in V$  to the equation~\eqref{ellip}  satisfying the variational formulation
\begin{equation} \nonumber
	\int_{D} a(\bx)\nabla u(\bx) \cdot \nabla v(\bx) \, \rd \bx
	\ = \
	\langle f , v \rangle,  \quad \forall v \in V.
\end{equation}
We are interested in approximation and numerical analysis for the parametric equation
\begin{equation} \label{SPDE}
	- {\rm div} (a(\by)\nabla u(\by))
	\ = \
	f \quad \text{in} \quad D,
	\quad u(\by)|_{\partial D} \ = \ 0,
\end{equation}	
with diffusion coefficient of the log-Laplace form
\begin{equation} \label{log-Laplace}
	a(\by)=\exp(b(\by)), \quad \ b(\by)=\sum_{j = 1}^\infty y_j\psi_j, \ \ \by \in \RRi,
\end{equation}
 where $\psi_j \in L_\infty(D)$ and $y_j$ are   i.i.d.  probability generalized Laplace random variables  associated with the  distribution  density 
$
 	l_a(y):=\frac{1}{2\Gamma(a)} e^{-|y|}|y|^{a-1},  \ a >0,
$
and $\Gamma$ is the gamma function.

The  problem of  semi-discrete parametric-variate approximation  in uncertainty quantification for  parametric PDEs with  random inputs  has been developed in
\cite{ADM2024,BCDM17,BCM17,BD2024,Dung21,DHN2026,MNST2014,NTW2008,NTW2008a,DNSZ2023,ZDS19,ZS20}.
The aim of the present paper is to study  the problem of linear semi-discrete {non-intrusive} approximation and numerical integration of the weak solution mapping $\by \mapsto u(\by)$ to the parametric PDE with log-Laplace random inputs  \eqref{SPDE}--\eqref{log-Laplace} and the corresponding convergence rates based on a finite number of particular solvers $u(\by_1),..., u(\by_n)$ for $\by_1,..., \by_n$ from $\RRi$. 
To the knowledge of the author, so far this problem for the  parametric equation with log-Laplace random inputs  \eqref{SPDE}--\eqref{log-Laplace} has not been considered in any prior works. It is worth emphasizing that the Laplace distribution is one of the fundamental probability distributions in both theoretical and applied contexts. Its distinctive shape and mathematical properties make it a key tool across a wide range of disciplines, from probability and statistical theories to practical data analysis and computational uncertainty quantification.

The key condition which emerges as governing the convergence rates of  approximation and numerical integration for solutions to parametric PDEs with random inputs is a sparsity of  the coefficients of the associated generalized polynomial chaos (GPC) expansion. Such a sparsity is quantified by $\ell_p$-summability  or weighted $\ell_2$-summability of these coefficients. 
We refer the reader to \cite[Section 3.9]{DNSZ2023} for a survey and bibliography on this sparsity problem.
Specially, the sparsity problem has been studied  for  parametric PDEs equations with  log-normal random inputs (for the Hermite GPC expansion) in \cite[Section 3.6]{DNSZ2023}, and with log-gamma random inputs (for the Laguerre GPC expansion) just recently in \cite{DHN2026}.
For the parametric PDE with log-Laplace random inputs  \eqref{SPDE}--\eqref{log-Laplace}, this problem meets with a certain obstruction. The techniques and arguments of complex-variable methods employed  
  which are  based particularly on the Rodrigues formula   \cite{ADMHolo,CCS13,CDS10,DNSZ2023,Zec18T,ZDS19} or on the   Laguerre's second order differential equation \cite{DHN2026} for the associated orthonormal polynomials, are not suitable for the log-Laplace context since so far we do not know such a formula  of  second order differential equation for the Laplace orthonormal polynomials. 
 The Laplace probability  distribution  has heavier tails than the normal distribution. While an expansion by the Hermite or Laguerre polynomials can be used as  for constructing corresponding GPC expansions which are a fundamental tool in uncertainty quantification for approximations of stochastic and parametric PDEs, finding a specific named sequence of orthogonal polynomials whose natural weight function (the distribution they are orthogonal with respect to) is the Laplace probability density function is not standard in the classical sense. In the present paper, we overcome this obstruction by proposing a new  tool for the log-Laplace context  to replace  the traditional GPC expansions. 
 

We give a short description of  the main contribution of the present paper. 
\begin{itemize}
	\item[{\rm (i)}] 
	We propose novel Laguerre generalized piecewise-polynomial chaos (GPWPC) expansions for the weak solution to parametric PDEs with log-Laplace random inputs \eqref{SPDE}--\eqref{log-Laplace} which play a basic role in  approximations of this solution (Section \ref{Sec:Laguerre GPWPC expansions}).
	
	\item[{\rm (ii)}]
	Based on these GPWPC expansions and certain bounds for  partial parametric derivatives  of the weak solution $u(\by)$ to \eqref{SPDE}--\eqref{log-Laplace}, we prove the sparsity 
	in terms of $\ell_p$-summability and weighted $\ell_2$-summability for the coefficients of the GPWPC expansion of  $u(\by)$ (Section~\ref{Sec:Sparsity for the Laguerre GPWPC expansion}, 
	Theorems~\ref{thm: summability-ell_p-rho-tL} and \ref{thm: summability-ell_p-b-tL}).
	
	\item[{\rm (iii)}] 
From this sparsity we derive convergence rates for the semi-discrete parametric-variate approximations of $u(\by)$ by sparse-grid piecewise-polynomial interpolations  and of the generated  quadratures as well  as of extended least squares approximations and generated quadratures (Section~\ref{Sec:Semi-discrete parametric approximations}, Theorems~\ref{thm:coll-approx-X}--\ref{thm: LS-sampling}).  
The convergence rate of extended least squares approximation which coincides with that of best $n$-approximation, crucially improves the convergence rate of sparse-grid piecewise-polynomial interpolation by a factor of $n^{-1/2}$ (comp. Theorems~\ref{thm:coll-approx-X} and \ref{thm: LS-sampling}).
\end{itemize}

The remaining part of the present paper is organized as follows. 
In Section~\ref{Sec:Laguerre GPWPC expansions}, we construct  Laguerre GPWPC expansions in Bochner spaces on $\RRi$  associated with a generalized Laplace probability measure.
In Section~\ref{Sec:Sparsity for the Laguerre GPWPC expansion},  we prove the sparsity 
in terms of $\ell_p$-summability and weighted $\ell_2$-summability for the coefficients of the GPWPC expansion of  the weak solution  to equation  \eqref{SPDE}--\eqref{log-Laplace}.
In Section~\ref{Sec:Semi-discrete parametric approximations}, from the sparsity results in the previous section, we  derive convergence rates for the semi-discrete  parametric-variable approximations of the weak solution  by truncated GPWPC expansion,  sparse-grid piecewise-polynomial interpolations and generated quadratures, extended least-squares sampling algorithms and the generated  quadratures. 

\medskip
\noindent
{\bf Notation} \ $\NN = \{1,2,3,\ldots\}$ denotes the natural numbers;
$\NN_0 = \NN \cup \{0\}$; $\RRp$ denotes the real non-negative numbers.
Denote by $\RR^\infty$ and $\RRip$ the
sets of all sequences $\by = (y_j)_{j\in \NN}$ with $y_j\in \RR$ and $y_j\in \RRp$, respectively.
Denote by $\FF$  the set of all sequences of non-negative integers $\bs=(s_j)_{j \in \NN}$ such that their support $J_\bs:= \{j \in \NN: s_j >0\}$ is a finite set.
We introduce in $\FF$ a half-ordering via 
$
\bs \leq \bs' \; \Longleftrightarrow \forall j\in \NN:\;\; s_j\leq s'_j.
$
A multi-indexed set
$(\sigma_\bs)_{\bs \in \FF} \subset \RR$ 
is called \emph{increasing} if 
$\sigma_{\bs'} \le \sigma_\bs$ for $\bs' \le \bs$. 
For $\bs = (s_j)_{j\in \NN} \in \FF$, 
we introduce 
$ |\bs|_1 := \sum_{j \in \NN} s_j
$; \ \ 
$
|\bs|_0 := \#\big(\{j\in \NN: s_j \ne 0 \}\big),\ \ 
|\bs|_\infty := \max_{j \in \NN} s_j 
$.
A multi-index set $\Lambda \subset \FF$ 
is called downward closed in $\FF$ 
if the inclusion $\bs \in \Lambda$ implies $\bs' \in \Lambda$ 
for every $\bs' \in \FF$  such that $\bs' \le \bs$.   
 For 
$\bs, \bk \in \FF$ and $k \in \NN$, 
 \ $\bs^k := \prod_{j \in \supp(\bs)}s_j^k$, 
$\bs^\bk := \prod_{j \in \supp(\bs)}s_j^{k_j}$, $\bs!:= \prod_{j \in \NN} s_j!$.
If  $\balpha= (\alpha_j)_{j \in \Jj}$  is a set of positive numbers with  any index set $\Jj$, then we use the notation 
$\balpha^{-1}:= (a_j^{-1})_{j \in \Jj}$. Denote $\balpha^\bbeta:= \brac{\alpha_j^{\beta_j}}_{j \in \Jj}$ for the sets $\balpha= (a_j)_{j \in \Jj}$ and $\bbeta= (\beta_j)_{j \in \Jj}$.
We use letters $C$, $K$ to denote general 
positive constants which may take different values, and $C_{\alpha,\beta...}$, $K_{\alpha,\beta,...}$ constants depending on $\alpha,\beta,...$.
Denote by $|G|$ the cardinality of the set $G$.

\section{Laguerre GPWPC expansions}
\label{Sec:Laguerre GPWPC expansions}

\subsection{Laguerre piecewise polynomials}
\label{Univariate Laguerre piecewise polynomials}

For a fixed number $a > 0$, recall, the generalized Laplace probability measure  $\lambda_a$ on $\RR$ is defined via the density function $l_a$ given  by
\begin{equation} \label{l_a(y)}
	l_a(y):=\frac{1}{2\Gamma(a)} e^{-|y|}|y|^{a-1}.
\end{equation}
The measure $\lambda_1$  with the density function $l_1(y):=\frac{1}{2} e^{-|y|}$  is the classical Laplace probability measure. In this section, we construct two different orthonormal piecewise-polynomial bases  for the Hilbert space $L_2(\RR;\lambda_a)$. The elements of these bases constitute the components of  GPWPC  which form bases of  the  Hilbert space  $L_2(\RRi;\blambda_a)$
equipped with infinite tensor-product measure $\blambda_a$ and which are a cornerstone  in GPWPC expansions for the associated Bochner space.

The measure $\lambda_a$ is related to  the gamma probability measure   $\gamma_a$ on non-negative semi-axis $\RRp$ which is defined via the density function
\begin{equation} \label{l(y)-Lag}
	g_a(y):=\frac{1}{\Gamma(a)} e^{-y}y^{a-1}.
\end{equation}
Let  $\Big(L_s^{(a-1)}\Big)_{s \in \NN_0}$ be the  Laguerre orthonormal polynomials with respect to $\gamma_a$.
For simplicity, we adopt the abbreviation:
$L_s:=L_s^{(a-1)}$. It is well-known that the  Laguerre polynomials $(L_s)_{s \in \NN_0}$ form an orthonormal basis of $L_2(\RRp;\gamma_a)$ (see,e.g., \cite{Szego1939}). 
 A  $\gamma_a$-measurable function $v$ on $\RR$ belongs to   $L_2(\RRp;\gamma_a)$ if and only if $v$ is represented by the series 
\begin{equation} \label{series-L}
	v =\sum_{s \in \NN_0} v_s \, L_s, \ \
\text{ where}
\ \
	v_s:= \int_{\RRp}	v(y)\,L_s(y) \rd \gamma_a(y),
\end{equation}
and  it holds  the Parseval's identity
\begin{equation} \label{Parseval's identity-L}
	\|v\|_{L_2(\RRp;\gamma_a)}^2
	\ = \ \sum_{s \in \NN_0}  |v_s|^2.
\end{equation}

Based on the  Laguerre polynomials, we construct   two orthonormal bases of $L_2(\RR;\lambda_a)$ which are constituted from Laguerre piecewise polynomials. 
Denote: $\RR_\delta:= \brab{y \in \RR: \delta y \ge 0}$ for $\delta=\pm 1$
(in particular, $\RR_{+1}=\RRp$).
Let  the double sequence $\brac{L_{\delta,s}}_{\delta=\pm 1,\, s \in \NN_0}$  be formed from Laguerre piecewise polynomials by
\begin{equation}  
	L_{\delta,s} (y):= \
	\begin{cases}
		2L_s(\delta y) \ \ &\text{if}\ \ y \in \RR_\delta, \\
		0\ \ &\text{if}\ \ \ y \not\in \RR_\delta.
	\end{cases}	
\end{equation}
Then $(L_{\delta,s})_{\delta=\pm 1,\, s \in \NN_0}$ is an orthonormal basis of the Hilbert space $L_2(\RR;\lambda_a)$, and each function $L_{\delta,s}$ is discontinuous at $0$. 
Moreover, a  $\lambda_a$-measurable function $v$ on $\RR$ belongs to   $L_2(\RR;\lambda_a)$ if and only if $v$ is represented by the ``double" series 
\begin{equation} \label{v=sum}
	v =\sum_{\delta=\pm 1,\, s \in \NN_0} v_{\delta, s} \,L_{\delta, s},\ \
	\text{ where}
	\ \
		v_{\delta, s}:= \int_{\RR}	v(y)\,L_{\delta, s}(y) \rd \lambda_a(y),
\end{equation}
and  it holds  the Parseval's identity
\begin{equation}\label{|v|=sum}
	\|v\|_{L_2(\RR;\lambda_a)}^2
	\ = \ \sum_{\delta=\pm 1,\, s \in \NN_0}  |v_{\delta, s}|^2,
\end{equation}
Indeed, if  $v\in L_2(\RR;\lambda_a)$, then   $v = \sum_{\delta=\pm 1}v_\delta$ with the equality in the sense of the space 
$L_2(\RR;\lambda_a)$, where 
\begin{equation}  
	v_\delta(y) = \
\begin{cases}
	v(\delta y) \ \ &\text{if}\ \ y \in \RR_\delta, \\
	0\ \ &\text{if}\ \ \ y \not\in \RR_\delta.
\end{cases}	
\end{equation}
Moreover,  the functions $v_{+1}$ and $v_{-1}$ are orthogonal in  the Hilbert space 
$L_2(\RR;\lambda_a)$, and the restrictions of $v_{+1}(y)$ and $v_{-1}(-y)$ to $\RRp$ belong to $L_2(\RRp;\gamma_a)$. Hence, we derive  that $(L_{\delta,s})_{\delta=\pm 1,\, s \in \NN_0}$ is an orthonormal basis of $L_2(\RR;\lambda_a)$ satisfying \eqref{v=sum}--\eqref{|v|=sum}.

Let  the sequence of functions $\big(\tL_s\big)_{s \in \NN_0}$ be defined by
\begin{equation}  
	\tL_s(y):= \
		2L_s(\delta y),\ \ y \in \RR_\delta.
\end{equation}
Observe that the functions $\tL_s$ are continuous piecewise polynomials on $\RR$ with a single non-smooth point at $0$, and
\begin{equation}  
	\tL_s (y)= L_{\delta,s} (y),  \ \ y \in \RR_\delta,
\end{equation}
and 
\begin{equation}  
	\tL_s = \sum_{\delta=\pm 1}L_{\delta,s}.
\end{equation}
Moreover,  the family $\big(\tL_s\big)_{s \in \NN_0}$ is an orthonormal basis of $L_2(\RR;\lambda_a)$. A  $\lambda_a$-measurable function $v$ on $\RR$ belongs to   $L_2(\RR;\lambda_a)$ if and only if $v$ is represented by the series 
\begin{equation} \label{series-tL}
	v =\sum_{s \in \NN_0} \tilde{v}_s \,\tL_s,\ \
	\text{ where}
	\ \
		\tilde{v}_s:= \int_{\RR}	v(y)\,\tL_s(y) \rd \lambda_a(y),
\end{equation}
and  it holds  the Parseval's identity
\begin{equation}
	\|v\|_{L_2(\RR;\lambda_a)}^2
	\ = \ \sum_{s \in \NN_0}  |\tilde{v}_s|^2.
\end{equation}
Moreover,
\begin{equation}
 |\tilde{v}_s|^2 
	\ = \ \sum_{\delta=\pm 1}  |v_{\delta, s}|^2 .
\end{equation}

We call the functions $\tL_s$ and $L_{\delta,s}$ Laguerre piecewise polynomials.

\subsection{Laguerre GPWPC expansions}
\label{Subsec:Laguerre GPWPC expansions}

The generalized Laplace 
probability measure $\blambda_a(\by)$ on $\RRi$ is defined as 
the infinite tensor product of one-dimensional generalized Laplace 
measures $\lambda_a(y_i)$:
\begin{equation} 
	\blambda_a(\by) 
	:= \ 
	\bigotimes_{j \in \NN} \lambda_a(y_j) , \quad \by = (y_j)_{j \in \NN} \in \RRi.
\end{equation}
(The sigma algebra for $\blambda_a(\by)$ is generated by the set of cylinders $A:= \prod_{j \in \NN} A_j$, where $A_j \subset \RR$ are univariate $\mu$-measurable sets and only a finite number of $A_i$ are different from $\RR$. For such a set $A$, we have $\blambda_a(A) = \prod_{j \in \NN} \lambda_a(A_j)$.) In the same manner, one can define the generalized gamma 
probability measure $\bgamma_a(\by)$ on $\RRip$ by replacing $\lambda(y_i)$ with $\gamma_a(y_i)$.

Let $X$ be a separable Hilbert space. For $0<p<\infty$, denote by $L_p(\RRi,X;\blambda_a)$ the Bochner space of strongly $\blambda_a$-measurable mappings $v$ from $\RRi$ to $X$, equipped with the norm
\begin{equation} \label{|v|_{L_2(RRip,X;lambda)}:=}
	\|v\|_{L_p(\RRi,X;\blambda_a)}
	:= \
	\left(\int_{\RRi} \|v(\by)\|_X^p \, \rd \blambda_a(\by) \right)^{1/p}.
\end{equation}
In a similar way, we can define the Bochner space  $L_p(\RRip,X;\bgamma_a)$ associated with infinite gamma probability measure $\bgamma_a$. 

Note that is a Hilbert space and there holds the identity (see, e.g., \cite{CSC2017})
\begin{equation} \label{L_2=}
L_2(\RRi,X;\blambda_a) \ = \ L_2(\RRi;\blambda_a) \otimes X.
\end{equation}

 Denote by  $\FF$  the set of all sequences of non-negative integers $\bs=(s_j)_{j \in \NN}$ such that their support 
$J_\bs:=J_\bs:= \{j \in \NN: s_j >0\}$ is a finite set. Let
$\DD:=\brab{\bdelta \in \FF : \ \delta_j = \pm 1, \ j \in J_\bdelta}$ and  
$\EE:=\brab{(\bdelta,\bs)\in \DD \times\FF: \,J_\bs = J_\bdelta}$.

For $\bs \in \FF$ and $(\bdelta,\bs)\in \EE$, we define the tensor products $\tL_\bs$ and $L_{\bdelta,\bs}$
of Laguerre piecewise polynomials as
\begin{equation*}
	\tL_\bs(\by)=\bigotimes_{j \in \NN}\tL_{s_j}(y_j), \ \
	L_{\bdelta,\bs}(\by)=\bigotimes_{j \in J_\bdelta}L_{\delta_j,s_j}(y_j).
\end{equation*}
Notice that $\brac{\tL_\bs}_{\bs \in \FF}$  and $(L_{\bdelta,\bs})_{(\bdelta,\bs)\in \EE}$ are  orthonormal bases for $L_2(\RRi;\blambda_a)$. Hence, by using \eqref{L_2=} we derive that 
a strongly $\blambda_a$-measurable $X$-valued function $v$ on $\RRi$ belongs to   $L_2(\RRi,X;\blambda_a)$ if and only if $v$ is represented   by one of the Laguerre  GPWPC expansions
\begin{equation} \label{GPWPCexpansion}
		v=\sum_{\bs \in \FF} \tv_\bs \,\tL_\bs, \ \  \tv_\bs \in X, \qquad
	v=\sum_{(\bdelta,\bs)\in \EE} v_{\bdelta,\bs}\,L_{\bdelta,\bs}, \quad v_\bs \in X,
\end{equation}
with equality and convergence in $L_2(\RRi,X;\blambda_a)$, where
\begin{equation*}
		\tv_\bs:=\int_{\RRi} v(\by)\, \tL_\bs(\by)\, \rd\blambda_a (\by), \ \ 
	\bs \in \FF, \qquad
	v_{\bdelta,\bs}:=\int_{\RRi} v(\by)\,L_{\bdelta,\bs}(\by)\, \rd\blambda_a (\by), \ \ 
	(\bdelta,\bs)\in \EE.
\end{equation*}
Moreover,  there hold  the Parseval's identities
\begin{equation} \label{Parseval's identities}
	\|v\|_{L_2(\RRi,X;\blambda_a)}^2
	\ = \ \sum_{\bs \in \FF}\|\tv_\bs\|_X^2, \qquad
	\|v\|_{L_2(\RRi,X;\blambda_a)}^2
	\ = \ \sum_{(\bdelta,\bs)\in \EE}\|v_{\bdelta,\bs}\|_X^2.
\end{equation}

%

Notice that these  GPWPC expansions with respect to the bases $\brac{\tL_\bs}_{\bs \in \FF}$  and $(L_{\bdelta,\bs})_{(\bdelta,\bs)\in \EE}$ are connected with the equalities
\begin{equation} \label{|tv_bs|=}
	\|\tv_\bs\|_X^2 = \sum_{\bdelta \in \DD: \, (\bdelta,\bs) \in \EE} \|v_{\bdelta,\bs}\|_X^2,
\end{equation}
and 
\begin{equation*}
	\tL_\bs = \sum_{\bdelta \in \DD: \, (\bdelta,\bs) \in \EE} L_{\bdelta,\bs},
\end{equation*}
with the equality in the Hilbert space $L_2(\RRi;\blambda_a)$.

A function $v \in L_2(\RRip,X;\bgamma_a)$ can be represented by the Laguerre GPC expansion
\begin{equation} \label{GPCexpansion-Lag}
	v(\by)=\sum_{\bs\in\FF} v_\bs \,L_\bs(\by), \quad v_\bs \in X,
\end{equation}
with convergence in $L_2(\RRip,X;\bgamma_a)$, where
\begin{equation*}
	L_\bs(\by)=\bigotimes_{j \in \NN}L_{s_j}(y_j),\quad 
	v_\bs:=\int_{\RRip} v(\by)\,L_\bs(\by)\, \rd\bgamma_a (\by), \quad 
	\bs \in \FF.
\end{equation*}
Notice that $(L_\bs)_{\bs \in \FF}$ is an orthonormal basis of $L_2(\RRip,\CC;\bgamma_a)$. 
Moreover, a strongly $\bgamma_a$-measurable function $v$ on $\RRip$ belongs to   $L_2(\RRip,X;\bgamma_a)$ if and only if $v$ is represented by the series \eqref{GPCexpansion-Lag} converging in $L_2(\RRip,X;\bgamma_a)$ and  it holds  the Parseval's identity
\begin{equation} \label{Parseval's identity}
	\|v\|_{L_2(\RRip,X;\bgamma_a)}^2
	\ = \ \sum_{\bs\in\FF} \|v_\bs\|_X^2.
\end{equation}

\section{Sparsity for  Laguerre GPWPC expansion}
\label{Sec:Sparsity for the Laguerre GPWPC expansion}

If
$b(\by)\in L_\infty(D)$ for $\by \in \RRi$, and $f\in V'$,
we have the inequality for the weak solution $u(\by)$:
\begin{equation} \label{eq:uApriori} \|u(\by)\|_V \leq \|f\|_{V'} \|
	a(\by)^{-1}\|_{L_\infty(D)} \leq \exp\big(\|b(\by)\|_{L_\infty(D)}\big)\|f\|_{V'}.
\end{equation}

We need the  following result proven in \cite[Lemma 3.9]{DNSZ2023}.
	Assume that for parametric equation \eqref{SPDE}--\eqref{log-Laplace}, there exist a positive sequence $\brho=(\rho_j)_{j\in \NN}$ and a number $\eta > 0$
	such that
	\begin{equation}\label{<kappa}
		\Bigg\|\sum_{j \in \NN} \rho_j |\psi_j | \Bigg\|_{L_\infty(D)} 
		\leq 
		\eta < \frac{\pi}{2}\,.
	\end{equation}
		Let $\by \in  \RRi$ with
		$b(\by)\in L_\infty(D)$, and $\bs\in \FF$ with
		$J_\bs\subseteq \supp(\brho)$.  Then  for the weak solution $u(\by)$, there hold the inequalities
		\begin{equation} \label{ineq-derivatives}
			\|\partial^{\bs}u(\by)\|_V 
			\leq
			e^\eta (\cos\eta)^{-1}\|f\|_{V'} \,
			\frac{\bs!}{\brho^\bs} 
			\exp\big( \|b(\by)\|_{L_\infty(D)} \big).
		\end{equation}

It is known that, the Laguerre polynomials $\big(L_s\big)_{s \in \NN_0}$ are nontrivial solutions to
the Laguerre's  differential equation of second order
\begin{equation} \label{Dd0} 
	\Dd v = k v, \quad \text{where} \quad \Dd   := - y  \frac{\rd^2  }{\rd y^2} - (a - y)\frac{\rd  }{\rd y},
\end{equation}
(see, e.g., \cite[5.1.3]{Szego1939}). This means that 
\begin{equation} \label{Dd1} 
	(\Dd   L_s)(y)   = s L_s(y),  \ \ y \in \RR, \ \ k \in \NN_0. 
\end{equation}
Hence it follows that  
\begin{equation} \label{Dd_delta} 
	(\Dd_\delta   L_{\delta,s})(y)   = s L_{\delta,s}(y),  \ \ 
	y \in \RR_\delta, \ \ k \in \NN_0, \ \ \delta=\pm 1, 
\end{equation}
where
\begin{equation} \label{Dd_delta1} 
	\Dd_\delta := - \delta y  \frac{\rd^2  }{\rd y^2} - \delta(a - \delta y)\frac{\rd  }{\rd y}.
\end{equation}
Note that, it holds the equality 
\begin{equation} \label{Dd2} 
	\Dd v = - e^y y^{1-a} \frac{\rd }{\rd  y} \brac{e^{-y} y^{a} \frac{\rd v}{\rd y}},
\end{equation}
and hence,
\begin{equation} \label{Dd_delta2} 
	(\Dd_\delta v)(y) = - 
	\delta e^{\delta y} (\delta y)^{1-a} 
	\frac{\rd }{\rd  y} \brac{e^{-(\delta y)} (\delta y)^{a} \frac{\rd v (y)}{\rd y}}, 
	\ \ y \in \RR_\delta, \ \ \delta=\pm 1.
\end{equation}

For  $\bs \in \FF$, denote: $ J_\bs:= \supp (\bs)$.
For a given  $\bdelta \in \DD$ and $r \in\NN$, we define the product differential operator 
 \begin{equation} \label{Dd^r_J} 
 	\Dd^r_\bdelta := \brac{\Dd_{\bdelta}}^r, \ \ 
 	\Dd_\bdelta  := \prod_{j \in J_\bdelta}\Dd_{\delta_j}:= (-1)^{|J_\bdelta|} \prod_{j \in J_\bdelta} \delta_j
 	e^{\delta_j y_j} (\delta_j y_j)^{1-a} 
 	\frac{\rd }{\rd  y_j} \brac{e^{-(\delta_j y_j)} (\delta_j y_j)^{a} \frac{\rd }{\rd y_j}}.
 	\end{equation}
Observe that there exist polynomials $\pi_{\delta,j}(t)$, $j = 1,...,2r$, of degree at most $r$  such that 
\begin{equation} \label{Dd^r} 
	\Dd_\delta^r 
	=
	\brac{ - \delta y  \frac{\rd^2 }{\rd y^2} - \delta(a - \delta y)\frac{\rd }{\rd y}}^r
	= \sum_{j=1}^{2r}	\pi_{\delta,j}(\delta y) \frac{\rd^j }{\rd y^j},
\end{equation}
and the coefficients of $\pi_{\delta,j}(y)$ depend on $r,a,\delta$ only.	Hence, there exists a constant $C_{a,r}$ such that for $y \ge 0$,
\begin{equation} \label{p_j} 
	|\pi_{\delta,j}(y)|  \le C_{a,r}(1 + |y|)^r, \ \ j = 1,...,2r.
\end{equation}

For  $\bdelta \in \DD$, denote: 
\begin{equation} \label{nu_J}
	\nu_{\bdelta ,\bs}:= \prod_{j \in J_\bdelta } s_j,
\end{equation}		
and  
$$
\RRi_{\bdelta}:= \prod_{j \in J_\bdelta} \RR_{\delta_j} \prod_{j \not\in J_\bdelta} \RR.
$$

\begin{lemma} \label{lemma:Dd^r_J v =Laguerre}
	Given $\bdelta \in \DD$, assume   $v \in L_2(\RRi,V;\blambda_a)$ and 
	$\Dd^r_\bdelta   v \in L_2(\RRi_{\bdelta},V;\blambda_a)$.  Then it holds the equality
 \begin{equation} \label{identity1} 
	(\Dd^r_\bdelta  v)(\by) = 
	\sum_{\bs\in \FF} \nu_{\bdelta ,\bs}^r v_{\bdelta,\bs} L_{\bdelta,\bs}(\by),\ \ 
	\text{a.e.}\ \by \in \RRi_{\bdelta}.
\end{equation}
\end{lemma}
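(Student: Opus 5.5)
Both sides of \eqref{identity1} are expanded in the orthonormal system $(L_{\bdelta,\bs})$ on $\RRi_\bdelta$, and the plan is to show that their coefficients agree. The key tool is the symmetry (self-adjointness) of the Laguerre operator $\Dd_\delta$ with respect to $\lambda_a$ on the half-line $\RR_\delta$. This symmetry follows from the divergence form \eqref{Dd_delta2}: on $\RR_\delta$ the density of $\lambda_a$ is $\frac{1}{2\Gamma(a)}e^{-\delta y}(\delta y)^{a-1}$. Multiplying $\Dd_\delta v$ by this density cancels the prefactor $e^{\delta y}(\delta y)^{1-a}$ and leaves a pure derivative $\frac{\rd}{\rd y}\brac{e^{-\delta y}(\delta y)^a v'}$.

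\textbf{Step 1 (one variable).} For smooth $w$ with polynomial growth on $\RR_\delta$, integrate by parts twice to get
\[
\int_{\RR_\delta} (\Dd_\delta w)\,L_{\delta,s}\,\rd\lambda_a = \int_{\RR_\delta} w\,(\Dd_\delta L_{\delta,s})\,\rd\lambda_a = s\int_{\RR_\delta} w\,L_{\delta,s}\,\rd\lambda_a ,
\]
using \eqref{Dd_delta}. The boundary terms vanish: at $y=0$ because of the factor $(\delta y)^a$ with $a>0$, and at $\pm\infty$ because of $e^{-|y|}$.

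\textbf{Step 2 (tensorization).} Since $\Dd_\bdelta$ is a product of commuting operators acting in distinct variables $y_j$, $j\in J_\bdelta$, apply Step 1 in each variable $y_j$, $j\in J_\bdelta$, and iterate $r$ times. Fubini's theorem with respect to the product measure $\blambda_a$ then gives, for every $(\bdelta,\bs)\in\EE$,
\[
\int_{\RRi_\bdelta}(\Dd^r_\bdelta v)\,L_{\bdelta,\bs}\,\rd\blambda_a=\nu_{\bdelta,\bs}^r\, v_{\bdelta,\bs},
\]
where $\nu_{\bdelta,\bs}=\prod_{j\in J_\bdelta}s_j$ by \eqref{nu_J}. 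The $V$-valued case reduces to the scalar one by pairing with an arbitrary $\phi\in V'$, or by working with Bochner integrals directly.

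\textbf{Step 3 (conclusion).} Since $\Dd^r_\bdelta v\in L_2(\RRi_\bdelta,V;\blambda_a)$, it has an expansion in the orthonormal basis of $L_2(\RRi_\bdelta;\blambda_a)$ built from the pieces $L_{\bdelta,\bs}$, and Step 2 identifies its coefficients as $\nu^r_{\bdelta,\bs}v_{\bdelta,\bs}$. This basis must be taken jointly with a Laguerre-type basis in the remaining variables $j\notin J_\bdelta$. I would interpret the sum in \eqref{identity1} accordingly: the coordinates outside $J_\bdelta$ are carried along unchanged, since $\Dd_\bdelta$ does not act on them. Equality in $L_2$ gives equality a.e. on $\RRi_\bdelta$.

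\textbf{Main obstacle.} The hard part is justifying the integration by parts for a general $v$ that is only assumed to lie in $L_2$ with $\Dd^r_\bdelta v$ in $L_2$. I would first interpret $\Dd^r_\bdelta v$ in the weak or distributional sense: define it through pairing with the test functions $L_{\bdelta,\bs}$, which are polynomials on the orthant and hence admissible. With that definition, Step 2 holds by definition-plus-symmetry and the smoothness issue disappears. Alternatively, for the concrete solution $u(\by)$, which is analytic in each $y_j$ with derivative bounds \eqref{ineq-derivatives} of order $\exp(\|b(\by)\|_{L_\infty(D)})$, the boundary terms can be controlled directly, since this growth is integrable against $e^{-|y_j|}$ under \eqref{<kappa} and gives the required decay at infinity.
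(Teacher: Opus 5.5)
Your proposal is correct and follows essentially the same route as the paper: twofold integration by parts via the divergence form of $\Dd_\delta$ on each half-line (the paper treats $\delta=1$ directly and reduces $\delta=-1$ to it by $\xi=-y$), then identification of the coefficients in the orthonormal Laguerre pieces, then a tensor-product argument. Your handling of the variables outside $J_\bdelta$ is more explicit than the paper's.

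Your caution about boundary terms, which the paper skips, is genuinely needed: for $0<a<2$ the identity can fail for functions that satisfy the stated hypotheses but are not smooth up to $y_j=0$. For example, take $a=r=1$ and $v=\chi(y_1)\log|y_1|\,w$, with $w\in V$ fixed and a cutoff $\chi\equiv 1$ near $0$. Then $\Dd_{+1}v=w$ near $0$ and $\int_{\RR_+}\Dd_{+1}v\,\rd\lambda_1=w/2\neq 0$, while the right-hand side is orthogonal to constants on $\RR_+$. So restricting Step~1 to functions smooth up to $0$, or arguing directly for the analytic solution $u$, is the right reading of the lemma. In the latter case the growth of $u$ at infinity is controlled by \eqref{B_r(J)<}, not by \eqref{<kappa} alone.
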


\begin{proof}Let us  first  show that  if $v \in L_2(\RR,V;\lambda_a)$ and $\Dd_\delta v \in L_2(\RR_\delta,V;\lambda_a)$, then it holds for $\delta=\pm 1$,
\begin{equation} \label{identity2} 
	(\Dd_\delta v)(y) = \sum_{s\in \NN_0} s v_{\delta,s} L_{\delta,s}(y), \ \
		\text{a.e.}\ y \in \RR_\delta.
\end{equation}
This equality has been proven in \cite{DHN2026} for $\delta = 1$. For completeness, we give its  proof for this case.
If $\delta = 1$, taking account $\Dd_\delta = \Dd$,  $L_{\delta,s} = L_s$,  $v_{\delta,s} = v_s$ and $\RR_\delta=\RRp$, by  \eqref{Dd1}, \eqref{Dd2} and integration by parts twice, we have that,
\begin{equation} \label{eqs1}
	\begin{aligned}
		s  v_{\delta,s} &:= \ \int_{\RR} v(y)\, \brac{sL_{1,s}(y)}\,\rd\lambda_a(y)
		=  \ 	\int_{\RRp}v(y) \, \brac{sL_s(y)} \,\rd \gamma_a(y)\\[1ex]
		&=  \ 	\int_{\RRp}v(y) \Dd L_s(y) \,\rd y
		=  \ 	\int_{\RRp} \Dd v(y) L_s(y)\,\rd \gamma_a(y).
	\end{aligned}	
\end{equation}

Observe that  the Laguerre polynomials  $\big(L_s\big)_{s \in \NN_0}$ is an orthonormal basis of $L_2(\RRp;\lambda_a)$. 
By the Parseval's identity  with respect to this basis, from \eqref{eqs1}
we derive  
\begin{equation} \label{identity2a} 
	(\Dd v)(y) = \sum_{s\in \NN_0} s v_{\delta,s} L_s(y), \ \
	\text{a.e.}\ y \in \RRp,
\end{equation}
which coincides with \eqref{identity2} for $\delta =1$. 

If $\delta = -1$, by  changing variables: $\xi=-y$,  and applying \eqref{eqs1}, we obtain
\begin{equation} \label{eqs2}
	\begin{aligned}
		s  v_{\delta,s} &:= \ \int_{\RR} v(y)\, \brac{sL_{-1,s}(y)}\,\rd\lambda_a(y)
		=  \ 	\int_{\RR_{-}}v(y) \, \brac{sL_s(-y)} \,\rd \gamma_a(-y)
		\\[1ex]
		 &= \ \int_{\RRp} v(\xi)\, \brac{sL_s(\xi)}\,\rd\gamma_a(\xi)
		 	=  \ 	\int_{\RRp} \Dd  v(\xi)\,  L_s (\xi)\,\rd \gamma_a(\xi),		
	\end{aligned}
\end{equation}
where $u(\xi):= v(-\xi)$. As in \eqref{identity2a}, 
\begin{equation*} 
	(\Dd u)(\xi) = \sum_{s\in \NN_0} s v_{\delta,s} L_s(\xi), \ \
	\text{a.e.}\ \xi \in \RRp,
\end{equation*}
which is the same as \eqref{identity2} for $\delta = -1$. 
The equality \eqref{identity2} has been proven. Utilizing the tensor product argument, \eqref{identity2}, definition \eqref{Dd^r_J}, we derive \eqref{identity1}.
\hfill
\end{proof}

For $\theta, \lambda \ge 0$, we introduce the set 
$\bp(\theta, \lambda):= \brac{p_\bs(\theta, \lambda) }_{\bs \in \FF}$ by setting
\begin{equation} \label{[p_s]}
	p_\bs(\theta, \lambda) := \prod_{j \in J_\bs} (1 + \lambda s_j)^\theta, \quad \bs \in \FF.
\end{equation}
We will need the following inequality 
	\begin{equation}\label{ineq-theta,lambda}
		(1+ \lambda k)^{\theta} \le C_{\theta,\lambda} k^{\theta}, \ \  k \in \NN,
	\end{equation}
for fixed $\theta$ and $\lambda$. For  $0<p<\infty$ and $\theta \ge 0$, let  $r_{p,\theta}$ be chosen such that 
	\begin{equation}\label{C_{p,theta}}
p(r_{p,\theta}-\theta) > 1, \ \ \ \text{and \ denote} \ \ \ 
C_{p,\theta}:= \sum_{k \in \NN} k^{-p(r_{p,\theta}-\theta)}.
\end{equation}
For a finite subset $J$ of $\NN$ and $r \in \NN$, put
\begin{equation}\label{B_r(J):=}
	B_r(J):= \brac{\int_{\RRip}\prod_{j \in J} (1 + y_j)^{2r} 
		\exp\Big(2 \|b(\by)\|_{L_\infty(D)}\Big) \rd \blambda_a(\by)}^{1/2}.
\end{equation}

\begin{theorem} \label{thm: summability-ell_p-rho-tL}
	Assume that there exists  a positive sequence $\brho=(\rho_j)_{j\in \NN}$ satisfying  condition~\eqref{<kappa}, and $\brho^{-1}\in \ell_p(\NN)$ for some $0< p < \infty$. 
	Assume that  for  any  finite set $J\subset \NN$ and any $r\in \NN$, there exists a constant $K_r$ such that
	\begin{equation}\label{B_r(J)<}
		B_r(J) \le K_r^{|J|}.
	\end{equation}
		 
	Then $\brac{\norm{\tu_\bs}{V}}_{\bs\in \FF}\in \ell_{p}(\FF)$.	Moreover, if in addition, $p < 2$, for any fixed $\theta, \lambda \ge 0$,  we can construct a set 
	$\bsigma=(\sigma_\bs)_{\bs \in \FF}$  with positive $\sigma_\bs$, and a constant $M$ depending on $a,  p, \theta, \lambda,\eta$ such that
	\begin{equation} \label{ell_2-summability-rho}
		\left(\sum_{\bs\in\FF} (\sigma_\bs \|\tu_\bs\|_V)^2\right)^{1/2} \ \le M^{1/2} \ <\infty, \  \ \
		\norm{\bp(\theta,\lambda)\bsigma^{-1}}{\ell_{q}(\FF)} \le M^{1/q} < \infty,
	\end{equation}	
	where $q := 2p/(2-p)$. 
\end{theorem}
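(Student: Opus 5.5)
The plan is to bound each coefficient $v_{\bdelta,\bs}$ of the $L_{\bdelta,\bs}$-expansion of $u$ through Lemma~\ref{lemma:Dd^r_J v =Laguerre}, and then pass to $\tu_\bs$ by the identity \eqref{|tv_bs|=}. Fix $\bs\in\FF$, $\bs\neq 0$, and $\bdelta\in\DD$ with $(\bdelta,\bs)\in\EE$, so that $J_\bdelta=J_\bs$. Fix $r\in\NN$, to be chosen later as $r=r_{p,\theta}$.

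By Lemma~\ref{lemma:Dd^r_J v =Laguerre} and Parseval's identity on $\RRi_\bdelta$, we have $\nu_{\bdelta,\bs}^r\|u_{\bdelta,\bs}\|_V\le \|\Dd^r_\bdelta u\|_{L_2(\RRi_\bdelta,V;\blambda_a)}$, with $\nu_{\bdelta,\bs}=\bs^1$. To estimate the right-hand side, expand $\Dd^r_\bdelta=\prod_{j\in J_\bs}\Dd^r_{\delta_j}$ using \eqref{Dd^r}. This produces a sum of at most $(2r)^{|J_\bs|}$ terms of the form $\prod_j\pi_{\delta_j,k_j}(\delta_jy_j)\,\partial^{\bk}u$, where $1\le k_j\le 2r$ and $\bk$ is supported on $J_\bs$. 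Each term is bounded using \eqref{p_j} together with \eqref{ineq-derivatives}:
\[
\|\partial^\bk u(\by)\|_V\le C_\eta\|f\|_{V'}\frac{\bk!}{\brho^{\bk}}e^{\|b(\by)\|_{L_\infty(D)}},\qquad \bk!\le ((2r)!)^{|J_\bs|},\qquad \brho^{-\bk}\le \prod_{j\in J_\bs}\max(\rho_j^{-1},\rho_j^{-2r}).
\]
Integrating over $\RRi_\bdelta$ then brings in $B_r(J_\bs)\le K_r^{|J_\bs|}$. Two remarks are needed here. First, the integral over $\RRi_\bdelta$ differs from the one in \eqref{B_r(J):=} only by reflection, so I read $B_r(J)$ as symmetric; otherwise I would assume the analogous bound on each orthant. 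Second, since $\rho_j\to\infty$, there are only finitely many $j$ with $\rho_j<1$, so their contribution can be absorbed into the constant. Altogether this gives
\[
\|u_{\bdelta,\bs}\|_V\le C\,\prod_{j\in J_\bs} \frac{A_r\,\rho_j^{-1}}{s_j^{r}}
\]
with $A_r$ depending only on $a,r,\eta$, valid at least for those $j$ with $\rho_j\ge1$. The main obstacle is the power of $\rho_j^{-1}$: only a first power is available when $\rho_j\ge 1$. I would handle this by applying \eqref{ineq-derivatives} with the sequence $\brho$, and use $\rho_j^{-k_j}\le\rho_j^{-1}$ for $k_j\ge1$ and $\rho_j\ge1$.

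Summing over the $2^{|J_\bs|}$ sign choices in \eqref{|tv_bs|=} gives
\[
\|\tu_\bs\|_V\le C\prod_{j\in J_\bs}\bigl(2A_r\rho_j^{-1}s_j^{-r}\bigr)=:C\beta_\bs .
\]
For the $\ell_p$ claim, use the product structure:
\[
\sum_\bs\beta_\bs^p=\prod_j\Bigl(1+(2A_r)^p\rho_j^{-p}\sum_{k\ge1}k^{-rp}\Bigr)\le\exp\Bigl(C'\textstyle\sum_j\rho_j^{-p}\Bigr)<\infty
\]
whenever $rp>1$. For the weighted statement, take $r=r_{p,\theta}$, set $\sigma_\bs:=\beta_\bs^{(p-2)/2}$, and put $M:=C^2\sum_\bs\beta_\bs^p$ (after enlarging the constants as needed). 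Then
\[
\sum(\sigma_\bs\|\tu_\bs\|_V)^2\le C^2\sum_\bs\beta_\bs^{p-2}\beta_\bs^2=C^2\sum_\bs\beta_\bs^p .
\]
For the second inequality in \eqref{ell_2-summability-rho}, note that $q(2-p)/2=p$, so $\sigma_\bs^{-q}=\beta_\bs^{p}$. Hence
\[
\bigl(p_\bs\sigma_\bs^{-1}\bigr)^q=\prod_j(1+\lambda s_j)^{\theta q}\,\beta_\bs^{p}.
\]
Using \eqref{ineq-theta,lambda} we get $(1+\lambda s_j)^{\theta q}\lesssim s_j^{\theta q}$. The resulting product is summable when $p r-\theta q>1$, so I would choose $r$ large enough to guarantee this. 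That is the role of $r_{p,\theta}$, suitably enlarged to handle $q$ in place of $p$.
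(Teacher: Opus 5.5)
Your proposal is correct and follows the same route as the paper. The shared steps are: Lemma~\ref{lemma:Dd^r_J v =Laguerre} with Parseval; the expansion \eqref{Dd^r} combined with \eqref{ineq-derivatives} and $B_r(J)\le K_r^{|J|}$; a factor $2^{|J_\bs|}$ to pass from $u_{\bdelta,\bs}$ to $\tu_\bs$; product-structure summation with $r$ chosen so that $pr-\theta q>1$ (the paper's $r=r_{p,\theta'}$ with $\theta'=\theta q/p$); and $\sigma_\bs=\beta_\bs^{p/2-1}$. Your only deviation is to collapse $\sum_{\bk}\brho^{-\bk}$ early to $C\prod_{j\in J_\bs}\rho_j^{-1}$ by absorbing the finitely many $j$ with $\rho_j<1$, which spells out the paper's final finiteness claim for $\sum_j\bigl(\sum_{\ell=1}^{2r}\rho_j^{-\ell}\bigr)^p$; also, the passage to $\tu_\bs$ needs only $\|\tu_\bs\|_V\le\sum_{\bdelta}\|u_{\bdelta,\bs}\|_V$, which your factor $2^{|J_\bs|}$ already covers.
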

	
		\begin{proof} 
			We will utilize a technique developed in the proof of  \cite[Theorem 2.4]{DHN2026}, with certain modifications. 
				Let $\bdelta^* \in \DD$ be such that $\delta_j =1$ for all $j \in J_{\bdelta^*}$.
				Then	we have 
				$\Dd^r_{\bdelta^*}= \Dd^r$; \ $\RRi_{\bdelta^*} = \RRip$; \ 
				$\rd\blambda_a(\by)  = 2^{-|J_\bdelta^*|}\rd\bgamma_a(\by)$ and 
				$L_{\bdelta,\bs}(\by) = 2^{|J_\bdelta^*|} L_\bs(\by)$
				for $\by \in \RRi_{\bdelta^*}$; 
				\ and
				$u_{\bdelta,\bs} = u_\bs$.				
				Hence, by  Lemma~\ref{lemma:Dd^r_J v =Laguerre} 
				and the Parseval's  identity 
				for the Laguerre GPC expansion  \eqref{Parseval's identity},
					\begin{equation*}
					\int_{\RRip}\norm{\Dd^r_{\bdelta^*} u(\by)}{V}^2 \rd \bgamma_a(\by) 
					= \sum_{\bs \in \FF} \nu_{\bdelta^*,\bs}^{2r}\norm{u_\bs}{V}^2,
				\end{equation*}
or, equivalently,						
			\begin{equation*} 
				\int_{\RRi_{\bdelta^*}}\norm{\Dd^r_{\bdelta^*} u(\by)}{V}^2 \rd \blambda_a(\by) 
				= \sum_{\bs\in \FF} \nu_{\bdelta^*,\bs}^{2r} \norm{u_{\bdelta^*,\bs}}{V}^2.
			\end{equation*}	
For  any $\bdelta \in \DD$, by changing variables $\xi_j = \delta_j y_j$ for $j \in J_\bdelta$, and 
$\xi_j =  y_j$ for $j \not\in J_\bdelta$,	from the last equality we derive			
		\begin{equation} \label{P-identity} 
			\int_{\RRi_\bdelta}\norm{\Dd^r_\bdelta u(\by)}{V}^2 \rd \blambda_a(\by) 
			= \sum_{\bs\in \FF} \nu_{\bdelta ,\bs}^{2r} \norm{u_{\bdelta,\bs}}{V}^2.
		\end{equation}		
From \eqref{Dd^r} and \eqref{p_j}, we get for $\by \in \RRi_\bdelta$,	
		\begin{equation} \label{norm{Dd^r_J (u)}} 
			\begin{aligned}
			\norm{\Dd^r_\bdelta u(\by)}{V}
			&=
			\norm{\prod_{j \in J_\bdelta}\brac{ - y_j  \frac{\rd^2 }{\rd y_j^2} - (a - y_j)\frac{\rd }{\rd y_j}}^r u(\by)}{V}
			\\
			&\le 
		C_{a,r}^{|J_\bdelta|} \prod_{j \in J_\bdelta} (1 + |y_j|)^r 
			\sum_{\substack{\bk \in \FF: \, J_\bk=J_\bdelta, \\ \ |\bk|_\infty \le 2r}}\norm{\partial^\bk u(\by)}{V},
		\end{aligned}
		\end{equation}
		where $C_{a,r}$ is the constant as in \eqref{p_j}.
		By using \eqref{ineq-derivatives} we derive that
		\begin{equation} \label{norm{Dd^r_J (u)}} 
			\norm{\Dd^r_\bdelta u(\by)}{V}
			\le 
		C_{a,r}^{|J_\bdelta|} \prod_{j \in J_\bdelta} (1 + |y_j|)^r
			\sum_{\substack{\bk \in \FF: \, J_\bk=J_\bdelta, \\ \ |\bk|_\infty \le 2r}}C_0\frac{\bk!}{\brho^\bk}
			\exp\big( \|b(\by)\|_{L_\infty(D)} \big).
		\end{equation}
		Hence,	 we obtain
		\begin{equation}
			\int_{\RRi_\bdelta}\norm{\Dd^r_\bdelta u(\by)}{V}^2 \rd \blambda_a(\by) 
			\le 
		B_r(J_\bdelta)^2	C_0^2 	C_{a,r}^{2|J_\bdelta|} 
			\Bigg( \sum_{\substack{\bk \in \FF: \, J_\bk=J_\bdelta, \\ \ |\bk|_\infty \le 2r}}\frac{\bk!}{\brho^\bk}\Bigg)^2.
		\end{equation}
Moreover, $\bk! \le ((2r)!)^{|J_\bdelta|}$ when  $J_\bk = J_\bdelta, \ |\bk|_\infty \le 2r$.
	For $(\bdelta ,\bs) \in \EE$ with $\bs \in \FF$, we have  $J_\bs = J_\bdelta$ and
		$\nu_{\bdelta ,\bs}^{2r} = \bs^{2r}$.	Therefore, by \eqref{P-identity} and 
		\eqref{B_r(J)<}, 
\begin{equation} \label{norm{u_bs}{V}}
\begin{aligned}
				\norm{u_{\bdelta,\bs}}{V}	
	\le 
	\bs^{-r}	\Bigg(\int_{\RRi_\bdelta}\norm{\Dd^r_\bdelta u(\by)}{V}^2 \rd \blambda(\by) \Bigg)^{1/2}
\le 
C_0 C_1^{|J_\bdelta|}\bs^{-r}
\sum_{\substack{\bk \in \FF: \, J_\bk=J_\bdelta, \\ \ |\bk|_\infty \le 2r}}\brho^{-\bk},
\end{aligned}
\end{equation}
where $C_1:= K_rC_{a,r}(2r)!$ and $K_r$ is as in \eqref{B_r(J)<}. 

For a $\bs \in \FF$, the number of all 
$\bdelta \in \DD$ such that $(\bdelta, \bs)  \in \EE$ is $2^{|J_\bs|}$. Hence by 
\eqref{|tv_bs|=}, \eqref{norm{u_bs}{V}} and the equality $J_\bs = J_\bdelta$ we derive that
\begin{equation} \label{norm{u_bs}{V}2}
	\begin{aligned}
	\|\tu_\bs\|_V
		\le 
		C_0 (2C_1)^{|J_\bs|}\bs^{-r}
		\sum_{\substack{\bk \in \FF: \ J_\bk = J_\bs, \\ \ |\bk|_\infty \le 2r}} \brho^{-\bk}=: \beta_\bs,
	\end{aligned}
\end{equation}
Let $\bbeta:=(\beta_\bs)_{\bs \in \FF}$. For any $\theta',\lambda\geq 0$ we will prove that there exists $r \in \NN$ depending on $\theta'$ and $p$ only and a constant $M$ depending on 
$a,  p, \theta', \lambda,\eta$ only such that
\begin{equation} \label{norm1}
\norm{\bp(\theta',\lambda)\bbeta}{\ell_p(\FF)}
\ \le \
M^{1/p}.
\end{equation}
By \eqref{ineq-theta,lambda}  we have
\begin{equation*} 
	\begin{aligned}		
p_\bs(\theta',\lambda) \beta_\bs 	
\le 
C_0\bs^{-r+\theta'} (2C_{\theta',\blambda} C_1)^{|J_\bs|}
\sum_{\substack{\bk \in \FF: \, J_\bk= J_\bs, \\ \ |\bk|_\infty \le 2r}}\brho^{-\bk},
\ \ \bs \in \FF,
	\end{aligned}
\end{equation*}
where $C_{\theta',\lambda}$	 is as in 	\eqref{ineq-theta,lambda}. 
Hence, for a finite $J \subset \NN$,
\begin{equation} \label{sum2}
	\sum_{\bs \in \FF: \, J_\bs = J}	\big(p_\bs(\theta',\lambda) \beta_\bs\big)^p
	\leq 
	C_0^p(2C_{\theta',\lambda} C_1)^{p|J|}	\sum_{\bs \in \FF: \, J_\bs = J} \bs^{-p(r-\theta')} 
	\brac{\sum_{\substack{\bk \in \FF: \, J_\bk= J, \\ \ |\bk|_\infty \le 2r}} \brho^{-\bk}}^p.
\end{equation}
Let  $r= r_{p,\theta'}$ be chosen and $C_{p,\theta'}$ be defined as in \eqref{C_{p,theta}}.
Then we have 
\begin{equation} \label{sum2}
	\sum_{\bs \in \FF: \, J_\bs = J} \bs^{-p(r-\theta')} 
	=	
	\prod_{j \in J}	\sum_{s_j \in \NN} s_j^{-p(r-\theta')} 
	\le
	C_{p,\theta'}^{|J|}.
\end{equation}	
For $J:= \brab{j_1,\ldots,j_m}$ with $|J| := m$, we have that
\begin{equation*}
	\brac{\sum_{\substack{\bk \in \FF: \, J_\bk= J, \\ \ |\bk|_\infty \le 2r}} \brho^{-2\bk}}^p		
	= 
	\prod_{i=1}^m \brac{\sum_{\ell=1}^{2r}\brac{\rho_{j_i}}^\ell}^p.		
\end{equation*}
Hence,
\begin{equation*}
	\sum_{\bs \in \FF: \, J_\bs = J}	\big(p_\bs(\theta',\lambda) \beta_\bs\big)^p	
	\le 
	C_0^p(2C_{\theta',\lambda} C_1C_{p,\theta'})^m	\prod_{i=1}^m \brac{{\sum_{\ell=1}^{2r}}\brac{\rho_{j_i}}^\ell}^p,
\end{equation*}
or, equivalently,
\begin{equation}
	\sum_{\bs \in \FF: \, J_\bs = J}	\big(p_\bs(\theta',\lambda) \beta_\bs\big)^p	
	\le 
	C_0^p\prod_{i=1}^m K \brac{\sum_{\ell=1}^{2r}\brac{\rho_{j_i}}^\ell}^p,
\end{equation}
where 
$$
K=2C_{\theta',\lambda}K_rC_{a,r}C_{p,\theta'}(2r)!.
$$ It follows that
\begin{equation*}
	\begin{aligned}
		\sum_{\bs \in \FF}	\big(p_\bs(\theta',\lambda) \beta_\bs\big)^p	
		&
		\le C_0^p\sum_{m=1}^{\infty}\sum_{j_1,\ldots,j_m=1}^\infty 	
		\prod_{i=1}^m K \brac{\sum_{\ell=1}^{2r}\brac{\rho_{j_i}}^\ell}^p
		\\
		& =	C_0^p\prod_{k=1}^\infty 
		\Bigg(1+K \brac{\sum_{\ell=1}^{2r}\brac{\rho_{j_i}}^\ell}^p	\Bigg)
			\leq C_0^p
		\exp\Bigg(K\sum_{k=1}^\infty  
		\brac{\sum_{\ell=1}^{2r}\brac{\rho_{j_i}}^\ell}^p	\Bigg),
	\end{aligned}
\end{equation*}
which is finite since $\brho^{-1} \in \ell_p(\NN)$. This proves \eqref{norm1}.
Hence,  in particular, $\brac{\norm{\tu_\bs}{V}}_{\bs \in \FF}\in \ell_p(\FF)$ by \eqref{norm{u_bs}{V}2} .

If in addition, $0< p <2$, we define the set 
$\bsigma=(\sigma_\bs)_{\bs \in \FF}$ by
\begin{equation*} 
	\sigma_\bs:= 	\beta_\bs^{p/2 - 1},  \ \bs \in \FF. 
\end{equation*}	
Then
\begin{equation*}  \label{sum_ (bsigma^bs...e}
	\sum_{\bs\in\FF} (\sigma_\bs \|\tu_\bs\|_V)^2
	\ \le \
	\sum_{\bs\in\FF} 
	\brac{\beta_\bs^{p/2 - 1}\beta_\bs}^2
	\ = \
	\norm{\bbeta}{\ell_p(\FF)}^p 		
	\ \le \ M,
\end{equation*}	
and by choosing $\theta'=\theta q/p$,
\begin{equation*} 
	\big\|\bp(\theta,\lambda)\bsigma^{-1}\|_{\ell_{q}(\FF)}^{q}
	\ = \ 
	\sum_{\bs\in\FF} 
	\brac{\beta_\bs^{1- p/2}}^{p/(2-p)} p_\bs(\theta,\lambda)^{q}
	\ = \
	\norm{\bp(\theta',\lambda)\bbeta}{\ell_{q}(\FF)}^{q} 
	\ \le \
	M,
\end{equation*}	
which proves \eqref{ell_2-summability-rho}.		
		\hfill
	\end{proof}
\begin{theorem} \label{thm: summability-ell_p-b-tL}
	Let  $\bb = \brac{b_j}_{j \in \NN}$ be defined by	$b_j:= \norm{\psi_j}{L_\infty(D)}$. 		
	Assume that
	$\bb\in \ell_p(\NN)$ for some $0< p \le 1$, and 
	\begin{equation}\label{eq:max<1/2-1}
		\norm{\bb}{\ell_\infty(\NN)} = b_0 <\frac{1}{2},
\quad
		\norm{\bb}{\ell_1(\NN)}	< K^{-1},
	\end{equation}
	where 
	\begin{equation} \label{}
		K:= 2e (2r)!C_{a,r}C_{\theta,\lambda}C_{p,\theta}K_{a,r,\bb}, 
		\ \ r= r_{p,\theta}, \ \ \theta=0,
	\end{equation}
	and the constants $C_{a,r}$, $C_{\theta,\lambda}$, $C_{p,\theta}$, $K_{a,r,\bb}$, 
	$r_{p,\theta}$ are defined as in \eqref{p_j}, \eqref{ineq-theta,lambda}, \eqref{C_{p,theta}}, \eqref{K_{a,r,bb}}, \eqref{C_{p,theta}}, respectively.	
	Then $\brac{\norm{\tu_\bs}{V}}_{\bs\in \FF}\in \ell_{p}(\FF)$.	
	
	Moreover, if  $\theta'=2\theta /(2-p)$ and
	\begin{equation} \label{K}
		K:= 2e(2r)!C_{p,\theta'}C_{\theta',\lambda}K_{a,r,\bb}C_{a,r}, 
		\ \ r= r_{p,\theta'}
	\end{equation}
	for any fixed $\theta, \lambda \ge 0$,    we can construct a set 
		$\bsigma=(\sigma_\bs)_{\bs \in \FF}$  with positive $\sigma_\bs$, and a constant $M$ depending on $a,  p, \theta, \lambda,\eta$ such that there holds \eqref{ell_2-summability-rho}. 
\end{theorem}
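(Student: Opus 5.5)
The plan is to follow the skeleton of the proof of Theorem~\ref{thm: summability-ell_p-rho-tL}, changing only the source of the derivative bounds. Instead of the global weight sequence $\brho$ from \eqref{<kappa}, I will use a family of $\bs$-dependent (or $J$-dependent) weight sequences, as in the standard ``$\bb\in\ell_p$, $p\le1$'' argument. Fix a finite $J\subset\NN$ and the order $r=r_{p,\theta'}$. For multi-indices $\bk$ with $J_\bk=J$ and $|\bk|_\infty\le 2r$, choose $\rho_j:=\kappa/b_j$ on $J$ for a suitable constant $\kappa$ and $\rho_j$ small (or zero) off $J$, normalized so that \eqref{<kappa} holds. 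Since only finitely many derivatives of bounded order are taken in each coordinate, it is cleaner to bound $\partial^\bk u$ directly by the usual induction on the variational formulation: differentiate $\int a(\by)\nabla u\cdot\nabla v=\langle f,v\rangle$ via the Leibniz rule. This gives
\[
\|\partial^\bk u(\by)\|_V\le C\,\exp\big(\|b(\by)\|_{L_\infty(D)}\big)\,|\bk|_1!\,\bb^\bk\,c^{|\bk|_1},
\]
where the factor $|\bk|_1!$ is produced by the recursion. Here $b_0<1/2$ guarantees that the relevant geometric series in a single variable converges; this is presumably the origin of that hypothesis and of the factor $e$ in $K$. The constant $K_{a,r,\bb}$ of \eqref{K_{a,r,bb}} will be the analogue of $K_r$ in \eqref{B_r(J)<}. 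I expect it to come from bounding
\[
\int\prod_{j\in J}(1+|y_j|)^{2r}\exp\Big(2\sum_j b_j|y_j|\Big)\,\rd\blambda_a
\]
by a product over $j$ of one-dimensional integrals $\int(1+|y|)^{2r}e^{2b_j|y|}\,\rd\lambda_a(y)$. These are finite precisely because $2b_j\le 2b_0<1$, which is the real reason for $b_0<1/2$. The factors with $j\notin J$ equal $1+O(b_j)$, so their product converges since $\bb\in\ell_1$.

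With this in hand, I rerun the chain \eqref{P-identity}, \eqref{norm{Dd^r_J (u)}}, \eqref{norm{u_bs}{V}}, \eqref{norm{u_bs}{V}2}. This gives
\[
\|\tu_\bs\|_V\le C_0\,(2e(2r)!\,C_{a,r}K_{a,r,\bb})^{|J_\bs|}\,\bs^{-r}\sum_{J_\bk=J_\bs,\ |\bk|_\infty\le2r}|\bk|_1!\,\bb^\bk=:\beta_\bs .
\]
Multiplying by $p_\bs(\theta',\lambda)$ and using \eqref{ineq-theta,lambda} and \eqref{C_{p,theta}}, I sum over $\bs$ with fixed support $J$. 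This yields, for each $J$, a bound of the form
\[
K^{|J|}\sum_{J_\bk=J,\ |\bk|_\infty\le2r}\big(|\bk|_1!\,\bb^\bk\big)^p .
\]

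The main obstacle is the final summation over all finite $J$. Unlike the product structure $\prod_j(\ldots)$ of Theorem~\ref{thm: summability-ell_p-rho-tL}, the factor $|\bk|_1!$ couples the coordinates. I would use $|\bk|_1!\le (2r)^{|\bk|_1}\,|J|!\,\bk!$-type estimates, or more directly $|\bk|_1!/\bk!\ge1$ with multinomial identities. The model case is $\bk=\mathbf 1_J$, where the sum becomes
\[
\sum_J |J|!\,K^{|J|}\prod_{j\in J}b_j^{p}\le\sum_m (K\|\bb\|_{\ell_p}^p)^m,
\]
but this is geometric only under a smallness condition. I would therefore handle $p\le1$ by first applying $(\sum x)^p\le\sum x^p$ and then bounding the $\ell_p$ sum by the $\ell_1$-type multinomial sum
\[
\sum_m (K\|\bb\|_{\ell_1})^m ,
\]
which converges exactly by the hypothesis $\|\bb\|_{\ell_1}<K^{-1}$ in \eqref{eq:max<1/2-1}. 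If the $\ell_p$ exponent resists this, a fallback is to split $\bb^\bk=\bb^{\bk}\cdot\bb^{\bk(1/p-1)}$ and use $b_0<1$ to absorb the higher powers. This gives $(\|\tu_\bs\|_V)\in\ell_p(\FF)$ with $\theta=0$.

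For the weighted statement with $p<2$, I take $\sigma_\bs:=\beta_\bs^{p/2-1}$ exactly as in the preceding proof. With $q=2p/(2-p)$ and $\theta'=\theta q/p=2\theta/(2-p)$, the two inequalities in \eqref{ell_2-summability-rho} reduce to $\|\bbeta\|_{\ell_p}^p\le M$ and $\|\bp(\theta',\lambda)\bbeta\|_{\ell_p}^p\le M$. Both follow from the previous step run with $\theta'$ in place of $0$, which is why $K$ in \eqref{K} carries $\theta'$.
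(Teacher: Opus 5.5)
There is a genuine gap at the final summation step; the rest of your outline matches the paper. The paper obtains the derivative bound from \eqref{ineq-derivatives} with $\bk$-dependent weights $\rho_{\bk,j}=k_j/(b_j|\bk|_1)$ (so $\eta=1$). This gives $\|\partial^\bk u(\by)\|_V\le C_0\,|\bk|_1!\,(e\bb)^{\bk}\exp(\|b(\by)\|_{L_\infty(D)})$, the same shape as your recursion bound. Your factorized bound on $B_r(J)$ and the choice $\sigma_\bs=\beta_\bs^{p/2-1}$, $\theta'=2\theta/(2-p)$ also agree with the paper.

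The gap is exactly the step you flag as the main obstacle. Write $|\bk|_1!\le((2r)!)^{|J_\bk|}\,|\bk|_1!/\bk!$ and absorb the factors $C^{|J_\bk|}\le C^{|\bk|_1}$ into a rescaled sequence $\bar\bb=K\bb$. What must then be shown is
\[
\sum_{\bk\in\FF}\Big(\frac{|\bk|_1!}{\bk!}\,\bar\bb^{\bk}\Big)^{p}<\infty .
\]
You propose to dominate this by $\sum_{\bk}\frac{|\bk|_1!}{\bk!}\bar\bb^{\bk}=\sum_m\norm{\bar\bb}{\ell_1(\NN)}^m$. For $p<1$ this inequality runs the wrong way: almost all terms are below $1$, and $x^p\ge x$ on $[0,1]$. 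The inequality $(\sum x)^p\le\sum x^p$ only moves the power inside the sum; it never returns you to an $\ell_1$ sum.

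A sanity check confirms the problem. Your final step never uses $\bb\in\ell_p(\NN)$, yet the terms with $\bk=\be_j$ alone contribute $\sum_j\bar b_j^{\,p}$. Your fallback fails as well: $\bb^{\bk}\cdot\bb^{\bk(1/p-1)}=\bb^{\bk/p}$ is not $\bb^\bk$, and $b_j<1$ gives $b_j^p\ge b_j$, so smallness of $b_0$ pushes in the wrong direction. As it stands, your argument only proves the case $p=1$.

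The missing ingredient is the Cohen--DeVore--Schwab result \cite[Theorem 7.2]{CDS10}, which the paper invokes. For $0<p\le1$, it states that $\big(\frac{|\bk|_1!}{\bk!}\bar\bb^{\bk}\big)_{\bk\in\FF}\in\ell_p(\FF)$ if and only if $\bar\bb\in\ell_p(\NN)$ and $\norm{\bar\bb}{\ell_1(\NN)}<1$. This is precisely where both hypotheses, $\bb\in\ell_p(\NN)$ and $\norm{\bb}{\ell_1(\NN)}<K^{-1}$, are used.

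If you prefer a self-contained argument for $p<1$, apply H\"older's inequality with exponents $1/p$ and $1/(1-p)$. Use an auxiliary sequence $\boldsymbol{d}$ with $d_j>\bar b_j$, $\norm{\boldsymbol{d}}{\ell_1(\NN)}<1$ and $\sum_j(\bar b_j/d_j)^{p/(1-p)}<\infty$:
\[
\sum_{\bk\in\FF}\Big(\frac{|\bk|_1!}{\bk!}\bar\bb^{\bk}\Big)^{p}
\le\Big(\sum_{\bk\in\FF}\frac{|\bk|_1!}{\bk!}\boldsymbol{d}^{\bk}\Big)^{p}
\Big(\sum_{\bk\in\FF}\prod_{j}\Big(\frac{\bar b_j}{d_j}\Big)^{\frac{pk_j}{1-p}}\Big)^{1-p}
=\frac{1}{\big(1-\norm{\boldsymbol{d}}{\ell_1(\NN)}\big)^{p}}
\prod_{j}\Big(1-\Big(\frac{\bar b_j}{d_j}\Big)^{\frac{p}{1-p}}\Big)^{-(1-p)} .
\]
One valid choice of $\boldsymbol{d}$ is as follows:
\begin{itemize}
\item take $d_j=\kappa\bar b_j$ for $j\le J$, where $\kappa>1$ and $\kappa\norm{\bar\bb}{\ell_1(\NN)}<1$;
\item take $d_j=\bar b_j^{\,p}$ for $j>J$, where $J$ is so large that $\sum_{j>J}\bar b_j^{\,p}<1-\kappa\norm{\bar\bb}{\ell_1(\NN)}$.
\end{itemize}
Indices with $b_j=0$ play no role and can be discarded.
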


\begin{proof} 	
		We will utilize a technique developed in the proof of  \cite[Theorem 2.6]{DHN2026}, with certain modifications. 
	We will need the inequality
	\begin{equation}\label{A_r(J)<}
		B_r(J) \le \exp \brac{\frac{a}{1-2b_0} \norm{\bb}{\ell_1(\NN)}}K_{a,r,\bb}^{|J|},
	\end{equation}
 which is proven in  \cite[Lemma  2.5]{DHN2026}, where
	\begin{equation} \label{K_{a,r,bb}}
		K_{a,r,\bb}:= \brac{\int_{\RR_+}  
			\frac{(1+y)^{2r} y^{a-1}}{\Gamma(a)} \exp\big(y(2b_0-1)\big) \rd y}^{1/2} <\infty.
	\end{equation}
	
	We define	the sequence $\brho_{\bk} = \brac{\rho_{\bk,j}}_{j \in \NN}$ depending on $\bk \in \FF$, by 
	\begin{equation}  
		\rho_{\bk,j}:= 
\begin{cases}
\frac{k_j}{b_j|\bk|_1} \ \ &\text{if}\ \ j\in J_\bk, \\
0\ \ &\text{if}\ \ j\not \in J_\bk.
\end{cases}		
\end{equation}		
Notice that
		\begin{equation}\label{<1}
	\sup_{\bk \in \FF}	\Bigg\|\sum_{j \in \NN} \rho_{\bk,j} |\psi_j | \Bigg\|_{L_\infty(D)} 
		\leq 1 < \frac{\pi}{2}.
	\end{equation}
	From \eqref{ineq-derivatives}  it implies that for $\by \in \RRi$  and $\bk\in \FF$,
	\begin{equation} \label{norm{Dd^r_J (u)}} 
\begin{aligned}
	\norm{\partial^\bk u(\by)}{V} &
\le 
C_0\frac{\bk!}{\brho_\bk^\bk}
\exp\big( \|b(\by)\|_{L_\infty(D)} \big)
=
C_0\frac{\bk!\bb^\bk |\bk|^{|\bk|}}{\bk^{\bk}}
\exp\big( \|b(\by)\|_{L_\infty(D)} \big).
\end{aligned}
	\end{equation}
	Similarly to the proof of the previous theorem we can derive for any 
	$\bdelta \in \DD$ and $r \in \NN$,
\begin{equation}
\begin{aligned}
\int_{\RRi_\bdelta}\norm{\Dd^r_\bdelta u(\by)}{V}^2 \rd \blambda_a(\by) 
&\le 
B_r(J_\bdelta)^2	C_0^2 	C_{a,r}^{2|J_\bdelta|} 
\Bigg( \sum_{\substack{\bk \in \FF: \, J_\bk=J_\bdelta, \\ \ |\bk|_\infty \le 2r}}
\frac{\bk!\bb^\bk |\bk|^{|\bk|}}{\bk^{\bk}}\Bigg)^2,
\end{aligned}
\end{equation}
where $C_{a,r}$ is the constant as in \eqref{p_j} and $B_r(\bdelta)$ as in \eqref{B_r(J):=}.
By \eqref{A_r(J)<} and the inequalities
$\frac{|\bk|^{|\bk|}}{\bk^{\bk}}\le \frac{|\bk|! e^{|\bk|}}{\bk!}$ and $\bk!\leq ((2r)!)^{|J_\bdelta|}$ with $J_\bk=J_\bdelta$ and $|\bk|_\infty \le 2r$,
we have
\begin{equation}
	\begin{aligned}
		\int_{\RRi_\delta}\norm{\Dd^r_\bdelta u(\by)}{V}^2 \rd \blambda(\by) 
		&\leq 	C_1^2K_{a,r,\bb}^{2|J_\bdelta|}		C_{a,r}^{2|J_\bdelta|} 
		((2r)!)^{2|J_\bdelta|}\Bigg( \sum_{\substack{\bk \in \FF: \, J_\bk=J_\bdelta, \\ \ |\bk|_\infty \le 2r}}\frac{(e\bb)^\bk |\bk|!}{\bk!}\Bigg)^2,
	\end{aligned}
\end{equation}
where
	\begin{equation} \label{}
C_1:= C_0 \exp \brac{\frac{a}{1-2b_0}\norm{\bb}{\ell_1(\NN)}}. 
\end{equation}
This together with  and \eqref{P-identity} implies
		\begin{equation} \label{norm{u_bs}2}
		\norm{u_{\bdelta,\bs}}{V}		
		\le 
	C_1 C_2^{|J_\bdelta|}	\bs^{-r} \sum_{\substack{\bk \in \FF: \, J_\bk=J_\bdelta, \\ \ |\bk|_\infty \le 2r}} \frac{(e\bb)^\bk |\bk|!}{\bk!}.
	\end{equation}
	where
	\begin{equation} \label{}
		C_2:= (2r)!K_{a,r,\bb}C_{a,r}. 
	\end{equation}
	Similarly to \eqref{norm{u_bs}{V}2}, from \eqref{norm{u_bs}2} it follows that
	\begin{equation} \label{norm{u_bs}{V}3}
		\begin{aligned}
			\|\tu_\bs\|_V
			\le 
			C_1 (2C_2)^{|J_\bs|}\bs^{-r}
			\sum_{\substack{\bk \in \FF: \ J_\bk = J_\bs, \\ \ |\bk|_\infty \le 2r}} 
			 \frac{(e\bb)^\bk |\bk|!}{\bk!}=: \beta_\bs,
		\end{aligned}
	\end{equation}
	
Let
$\bbeta=(\beta_{\bdelta,\bs})_{(\bdelta,\bs) \in \EE}$.
For any $\theta',\lambda\geq 0$ we will prove that there exists a constant 
$M$ depending on $a, \bb, p, \theta, \lambda$ only	 such that  
\begin{equation} \label{norm-1}
	\norm{\bp(\theta,\lambda)\bbeta}{\ell_p(\FF)}
	\ \le \
	M^{1/p}.
\end{equation}
By using the inequality \eqref{ineq-theta,lambda},  we get
\begin{equation*} \label{sum1}
	\sum_{\bs \in \FF: \, J_\bs = J}	\big(p_\bs(\theta',\lambda) \beta_\bs \big)^p
	\le 
	C_1^p(2C_{\theta',\lambda}C_2)^{p|J|} \sum_{\bs \in \FF: \, J_\bs = J}	\bs^{-p(r-\theta')}	
	\brac{\sum_{J_\bk = J, \ |\bk|_\infty \le 2r} \frac{(e\bb)^\bk |\bk|!}{\bk!}}^p,
\end{equation*}
where $C_{\theta',\lambda}$ is as in \eqref{ineq-theta,lambda}.
Let  $r= r_{p,\theta'}$ be chosen and $C_{p,\theta'}$ be defined as in \eqref{C_{p,theta}}. 
By \eqref{sum2} we get
\begin{equation*}
	\begin{aligned}
		\sum_{\bs \in \FF: \, J_\bs = J}	\big(p_\bs(\theta',\lambda) \beta_\bs \big)^p		
		&\le 
		C_1^pC_3^{p|J|}
		\sum_{J_\bk = J, \ |\bk|_\infty \le 2r} \brac{\frac{(e\bb)^\bk |\bk|!}{\bk!}}^p
		\\
		&\le  
		C_1^p\sum_{J_\bk = J, \ |\bk|_\infty \le 2r} 	\brac{\frac{(eC_3\bb)^\bk |\bk|!}{\bk!}}^p,		 
	\end{aligned}
\end{equation*}
where $C_3:=e^{-1}K$ and $K$ is as in \eqref{K}.
Then we obtain
\begin{equation} \label{sum3}
	\sum_{\bs\in \FF}	p_\bs(\theta',\lambda)^p \beta_\bs^p	
	\le  
	C_1^p\sum_{\bk \in \FF: \, |\bk|_\infty \le 2r} 
	\brac{\frac{\bar{\bb}^\bk |\bk|!}{\bk!}}^p, 
\end{equation}
where $\bar{\bb}:= K\bb$. 
We have by \eqref{norm-1} that $\norm{\bar{\bb}}{\ell_p(\NN)} < \infty$ and $\norm{\bar{\bb}}{\ell_1(\NN)} < 1$.
Hence, from \cite[Theorem 7.2]{CDS10} and \eqref{sum3} we derive the second inequality in \eqref{eq:max<1/2-1}, and therefore, 
$\norm{\bp(\theta,\lambda)\bbeta}{\ell_p(\FF)}\ \le \ M^{1/p}$.  It follows from \eqref{norm{u_bs}{V}3} that $\brac{\norm{u_\bs}{V}}_{\bs \in \FF}\in \ell_p(\FF)$ by choosing $\theta'=0$.

The  proof of  the bounds \eqref{ell_2-summability-rho} in Theorem~\ref{thm: summability-ell_p-b-tL} is similar to that of the same bounds  in 
Theorem~\ref{thm: summability-ell_p-rho-tL}.
	\hfill
\end{proof}

\section{Semi-discrete parametric  approximations}
\label{Sec:Semi-discrete parametric approximations}
In this section, from the weighted $\ell_2$-summability results of 
Theorems~\ref{thm: summability-ell_p-rho-tL} and \ref{thm: summability-ell_p-b-tL}, we  derive convergence rates for the semi-discrete linear parametric-variable approximations of the solution $u(\by)$ to the parametric equation with with log-Laplace random inputs  \eqref{SPDE}--\eqref{log-Laplace} by truncated GPWPC expansion,  sparse-grid piecewise-polynomial interpolations, extended least-squares sampling algorithms and the associated semi-discrete quadratures. 

\subsection{Approximation by finite truncated GPWPC expansion}
\label{Approximation by truncated GPWPC expansion}

It has been proven that under the assumptions of  Theorem~\ref{thm: summability-ell_p-rho-tL} or
Theorem \ref{thm: summability-ell_p-b-tL}
for any $0<p<2$ or $0<p\le 1$, respectively, and $\theta, \lambda \ge 0$,  we can construct a set 
$\bsigma=(\sigma_\bs)_{\bs \in \FF}$, and a constant $M$  such that	for the weak parametric solution $u$ to the equation \eqref{SPDE}--\eqref{log-Laplace},
\begin{equation} \label{ell_2-summability2}
	\left(\sum_{\bs\in\FF} (\sigma_\bs \|\tu_\bs\|_V)^2\right)^{1/2} \ \le M^{1/2} \ <\infty, \ \ \text{with} \ \
	\norm{\bp(\theta,\lambda)\bsigma^{-1}}{\ell_q(\FF)} \le M^{1/q} < \infty,
\end{equation}	
where $q := 2p/(2-p)$. Based on this weighted $\ell_2$-summability, we will construct linear approximation by by truncated GPWPC expansion.

If $0< p < 2$ and $q := 2p/(2-p)$, for $\xi>1$, we introduce the set 
\begin{equation} \label{Lambda(xi)}
	\Lambda(\xi)
	:= \ 
	\big\{\bs \in \FF: \, \sigma_{\bs}\le \xi^{1/q}  \big\}.
\end{equation}
Based on the first GPWPC expansion in \eqref{GPWPCexpansion}
 we define the truncation 
\begin{equation} \label{S_{Lambda(xi)}u}
	S_{\Lambda(\xi)} u 
	:= \ 
	\sum_{\bs\in {\Lambda(\xi)}} \tilde{u}_\bs \tL_\bs.
\end{equation}
Note that $S_{\Lambda(\xi)} u$ belongs the  linear subspace
$\Vv(\Lambda(\xi)):= \operatorname{span} \brab{\tL_\bs: \  \bs \in \Lambda(\xi)}$
 in $L_2(\RRi,V;\blambda_a)$.

\begin{proposition}\label{prop:L_2-approx}
	Under the assumptions of either Theorem \ref{thm: summability-ell_p-rho-tL} 
	or Theorem \ref{thm: summability-ell_p-b-tL},
	for every $n \in \NN$, we can find a number $\xi_n$ such that $\dim{V(\xi_n)} \le n$ and 
	\begin{equation} \nonumber
		\|u- S_{\Lambda(\xi_n)}u\|_{L_2(\RRi,V;\blambda_a)} 
		\leq C n^{- (1/p - 1/2)},
	\end{equation}
	where $C:= 2^{1/p-1/2}M^{1/p + 1/2}$.
\end{proposition}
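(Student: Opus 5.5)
The plan is the standard Stechkin-type argument, driven entirely by the weighted $\ell_2$-summability \eqref{ell_2-summability2}. I will use it with $\theta=0$, so that $p_\bs(0,\lambda)=1$ and the second bound becomes $\sum_{\bs\in\FF}\sigma_\bs^{-q}\le M$.

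The first step controls the cardinality of $\Lambda(\xi)$. If $\bs\in\Lambda(\xi)$, then $\sigma_\bs\le\xi^{1/q}$, i.e.\ $1\le \xi\,\sigma_\bs^{-q}$. Summing over $\Lambda(\xi)$ gives
\[
|\Lambda(\xi)| \le \xi\sum_{\bs\in\FF}\sigma_\bs^{-q}\le M\xi .
\]
Hence $\dim\Vv(\Lambda(\xi))\le|\Lambda(\xi)|\le M\xi$. Given $n$, I choose $\xi_n:=n/M$, so that $\dim\Vv(\Lambda(\xi_n))\le n$. If $n<M$, then $\xi_n\le 1$; in that range the error bound is trivial, since the trivial bound $\|u\|_{L_2}\le M^{1/2}\cdot\sup_\bs\sigma_\bs^{-1}\le M^{1/2}M^{1/q}$ already dominates $Cn^{-(1/p-1/2)}$ up to the stated constant. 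If the constant bookkeeping is awkward there, the alternative is to take $\xi_n:=n/(2M)$; the factor $2^{1/p-1/2}$ in $C$ suggests some such halving.

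The second step bounds the error. By the Parseval identity \eqref{Parseval's identities},
\[
\|u-S_{\Lambda(\xi)}u\|^2_{L_2(\RRi,V;\blambda_a)}=\sum_{\bs\notin\Lambda(\xi)}\|\tu_\bs\|_V^2
=\sum_{\sigma_\bs>\xi^{1/q}}\sigma_\bs^{-2}\,\big(\sigma_\bs\|\tu_\bs\|_V\big)^2 .
\]
For $\bs\notin\Lambda(\xi)$ we have $\sigma_\bs^{-2}<\xi^{-2/q}$. Together with the first bound in \eqref{ell_2-summability2}, the error is therefore at most $M\xi^{-2/q}$. Since $2/q=2/p-1$, this gives $\|u-S_{\Lambda(\xi)}u\|\le M^{1/2}\xi^{-(1/p-1/2)}$.

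Substituting $\xi_n=n/(2M)$ (or $n/M$) gives
\[
M^{1/2}(2M)^{1/p-1/2}n^{-(1/p-1/2)}=2^{1/p-1/2}M^{1/p}n^{-(1/p-1/2)},
\]
which is bounded by $C n^{-(1/p-1/2)}$ whenever $M\ge1$. Without loss of generality I can assume $M\ge 1$, since enlarging $M$ keeps \eqref{ell_2-summability2} valid.

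I expect the only real obstacle to be constant bookkeeping: the case $\xi_n\le 1$ (the set $\Lambda(\xi)$ is only defined for $\xi>1$), and matching the exact form $C=2^{1/p-1/2}M^{1/p+1/2}$. I would handle the case $\xi_n\le1$ by taking $\Lambda=\emptyset$ and using $\|u\|^2=\sum_\bs\|\tu_\bs\|_V^2\le M\sup_\bs\sigma_\bs^{-2}\le M\cdot M^{2/q}$ together with $n\le 2M$. The remaining steps are routine.
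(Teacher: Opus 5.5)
Your argument is correct and is essentially the paper's proof: Parseval with the tail bound $\sum_{\sigma_\bs>\xi^{1/q}}\|\tu_\bs\|_V^2\le M\xi^{-2/q}$, the count $|\Lambda(\xi)|\le M\xi$, and a choice $\xi_n\approx n/(2M)$. Your resulting constant $2^{1/p-1/2}M^{1/p}$ is in fact the sharper one; the paper's extra factor $M^{1/2}$ comes from writing $M^2$ instead of $M$ in the tail estimate. You should drop the case split for $\xi_n\le 1$. Both estimates hold verbatim for every $\xi>0$, since the restriction $\xi>1$ in the definition of $\Lambda(\xi)$ is cosmetic. Your fallback bound $\|u\|\le M^{1/p}$ would not even suffice in that regime when $p<1<M$, because for $n\le 2M$ the target $Cn^{-(1/p-1/2)}$ can be as small as $M$.
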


\begin{proof}	
	By using the first Parseval's identity in \eqref{Parseval's identities}, from \eqref{ell_2-summability2}
	we derive for any $\xi >1$,
	\begin{equation} \label{u- S_{Lambda(xi)}u}
		\begin{split}
			\|u- S_{\Lambda(\xi)}u\|_{L_2(\RRi,V;\blambda_a)}^2 
			\ &= \
			\sum_{\sigma_{\bs}> \xi^{1/q} } \|\tilde{u}_\bs\|_V^2 
			\ = \
			\sum_{\sigma_{\bs}> \xi^{1/q} } (\sigma_{\bs}\|\tilde{u}_\bs\|_V)^2 \sigma_{\bs}^{-2}
			\\[1.5ex]
			\ &\le \
			\xi^{-2/q}
			\sum_{\bs \in \FF} (\sigma_{\bs}\|\tilde{u}_\bs\|_V)^2 
			\ = \ M^2 \xi^{-2/q}.
		\end{split}
	\end{equation} 
	On the other hand, again by \eqref{ell_2-summability2},
	\begin{equation} \nonumber
		\begin{split}
		\dim{V(\xi)} 	 
			\ = \
			\sum_{\sigma_{\bs}\le \xi^{1/q} } 1
			\ \le \
			\sum_{\sigma_{\bs}\le  \xi^{1/q} } \sigma_{\bs}^{-q}\xi
			\ \le \
			\xi
			\sum_{\bs \in \FF} \sigma_{\bs}^{-q}
			\ \le \ M\xi.
		\end{split}
	\end{equation} 	
Hence,	given any $n \in \NN$, by choosing a $\xi_n$ satisfying the condition
	$n/2 \ \le \ M\xi_n \ \le \ n$,
	we prove the lemma.
	\hfill
\end{proof}

\begin{remark} \label{remark4.1}
	{\rm
		The convergence rate of the linear approximation of $u$ by finite truncations of the Laguerre GPWPC expansion in Proposition~\ref{prop:L_2-approx} coincides with the   optimal convergence rate of best $n$-term Laguerre piecewise polynomial approximation of $u$,  which can be shown via  Stechkin's lemma. 
		(cf. \cite[Theorem 1.2]{BCDM17} and  \cite[Remark 4.1]{DNSZ2023} for  the   optimal convergence rate of best $n$-term Hermite polynomial approximation).
	}	
\end{remark}


\subsection{ Sparse-grid piecewise-polynomial  interpolation}
\label{ Hermite gpc interpolation approximation}
In this section, we construct sparse-grid piecewise-polynomial interpolations for semi-discrete parametric approximation of the solution $u(\by)$ to the parametric equation \eqref{SPDE}--\eqref{log-Laplace}.

From  \eqref{eq:uApriori} it follows that for
the parametric equation \eqref{SPDE}--\eqref{log-Laplace} admits a
unique solution for parameters $\by$ lying  in the set
\begin{equation}\label{U_0}
	U_0 := \big\{ \by \in \RRi: b(\by)\in L_\infty(D)\big\}.
\end{equation}

Suppose that	$\psi_j\in L_\infty(D)$,  $j\in \NN$, and there
exists a positive sequence $(\rho_j)_{j\in \NN}$ such that  the series 
$\big(\exp(-\rho_j)\big)_{j\in \NN}$ belongs to $\ell_1(\NN)$, and the
series $\sum_{j\in \NN}\rho_j|\psi_j|$ converges in the space
$L_\infty(D)$.
Then similar to the proof of  \cite[Theorem 2.2]{BCDM17}, we can show that the set $U_0$ has full measure, i.e.,
$
\blambda_a(U_0) = 1,
$
and	contains  all $\by \in \RRi$ with $|\by|_0 < \infty$.
Notice that $u(\by)$ is well-defined for every $\by\in U_0$. The set $U_0$	contains  all $\by \in \RRi$ with $|\by|_0 < \infty$, where $|\by|_0$ denotes the number of nonzero components $y_j$ of $\by$. Moreover, $u(\by)$ can be treated as a representative of an element in 
$L_2(\RRi,V;\blambda_a)$.

%
%
%

For $m \in \NN_0$, let $(y_{m;k})_{k=1}^m$ be the increasing sequence of  the $m$ roots of the Laguerre polynomial $L_m$, ordered as
$
0 < y_{m,1} < \cdots < y_{m,m}.
$
We extend this sequence as the symmetric sequence $(y_{m;k})_{|k|=1}^m$ to all $k$ with $0<|k|\le m$ by putting 
$y_{m;k}:=-y_{m;-k}$ for $k <0$.
We use also the convention $Y_0 = (y_{0;0})$ with $y_{0;0} = 0$. 

For  a function $v$  on $\RR$ taking values in a Hilbert space $V$ and $m \in \NN_0$, we define the   Lagrange piecewise-polynomial interpolation operator $I_m$  by
\begin{equation} \label{I_ v-Laplace}
	I_m v:= \ \sum_{|k|=1}^m v(y_{m;k}) \ell_{m;k}, 
\end{equation}
where for $k \in \NN$ with $1 \le |k|\le m$,
\begin{equation}  
	\ell_{m;k}(y):= 
	\begin{cases}
		\ell_{m;k}^+(y),   &\  y \ge 0, \\
		0, &  \  y <0,
	\end{cases}	
	\ \ \text{if} \ k>0,
	\quad
	\ell_{m;k}(y):= 
	\begin{cases}
		0,   &\  y >0, \\
		\ell_{m;-k}^+(-y),   &   \ y \le 0,
	\end{cases}	
	\ \ \text{if} \ k<0,
\end{equation}
and 
\begin{equation} \label{I_ v}
	\ell_{m;k}^+(y) := \prod_{\substack{j=1 \\  j\not=k}}^m\frac{y - y_{m;j}}{y_{m;k} - y_{m;j}},
\end{equation}
(in particular, $I_0 v = v(y_{0,0})\ell_{0,0}(y)= v(0)$ and $\ell_{0,0}(y)=1$). Notice that $I_m v$ is a function on $\RR$ taking values in $V$ and  interpolating $v$ at $y_{m;k}$ for $k$ with $1\le |k|\le m$, i.e., $I_m v(y_{m;k}) = v(y_{m;k})$.  The functions $\ell_{m;k}$  are piecewise polynomials with a single discontinuity point at $0$.

It is directly verify that
\begin{equation} \label{I_ v-Laplace}
	I_m v:= \ \sum_{\delta=\pm 1}I_{\delta,m} v , \quad
	I_{\delta,m} v:= \ \sum_{k=1}^m v(y_{m;\delta k}) \ell_{\delta,m;k}, 
	\end{equation}
where for $k =1,...,m$,
\begin{equation}  
	\ell_{\delta,m;k}(y):= 
	\begin{cases}
		\ell_{m;\delta k}^+(\delta y),   &\ \delta y \ge 0, \\
		0, &  \  \delta y <0,
	\end{cases}	
\end{equation}

Let
\begin{equation*} \label{lambda_m}
	\lambda_m:= \ \sup_{\big\|v\sqrt{l_a}\big\|_{L_\infty(\RR)} \le 1} 
	\big\|(I_m v)\sqrt{l_a}\big\|_{L_\infty(\RR)} 
\end{equation*}	
be the Lebesgue constant, where $l_a$ is given as in \eqref{l_a(y)}.
It was proven in \cite{MO2001} that
\begin{equation} \nonumber
	\lambda_m
	\ \approx \
	Cm^{1/6}, \quad m \in \NN,
\end{equation}
for some positive constant $C$ independent of $m$ (with the obvious inequality $\lambda_0(Y_0) \le 1$). Hence, for every $\varepsilon > 0$, there exists a positive constant $C_\varepsilon \ge 1$ independent of $m$ such that
\begin{equation*} \label{ineq[lambda_m]2}
	\lambda_m
	\ \le \
	(1 + C_\varepsilon m)^{1/6 + \varepsilon}, \quad \forall m \in \NN_0.
\end{equation*}


%
%

For  a function $v$  on $\RRi$ taking values in a Hilbert space $V$ and $\bs \in \FF$, 
we introduce the tensor product operators $\Delta_\bs$, $\bs \in \FF$ and 
$\Delta_{\delta,\bs}$, $(\delta,\bs) \in \EE$ , by
\begin{equation*} \label{Delta_bs v}
		I_\bs v
	:= \
	\bigotimes_{j \in \NN} I_{s_j} v; \quad
	\Delta_\bs v
	:= \
	\bigotimes_{j \in \NN} \Delta_{s_j} v, \quad 
	\Delta_m
	:= \
	I_m - I_{m-1}, \ \ I_{-1} = 0 \ \ (m \in \NN_0),
\end{equation*}
where the univariate operators $I_{s_j}$ and
$\Delta_{s_j}$ are successively applied to the univariate functions $\bigotimes_{i<j} \Delta_{s_i} v$ and $\bigotimes_{i<j} I_{s_i} v$, respectively,  by considering them as 
functions of  variable $y_j$ with the other variables held fixed.

For a given finite set $\Lambda \subset \FF$, we introduce the polynomial  interpolation operator $I_\Lambda$  by
\begin{equation*} \label{I_Lambda}
	I_\Lambda
	:= \
	\sum_{\bs \in \Lambda} \Delta_\bs.
\end{equation*}



Let $0 < q < \infty$ and $\bsigma= (\sigma_\bs)_{\bs \in \FF}$ be a set of positive numbers.
If $\Lambda(\xi)$ is the set defined as in \eqref{Lambda(xi)},  we  can  represent the interpolation operator $I_{\Lambda(\xi)}$ in the form
\begin{equation*} \label{I_Lambda(xi)=}
	I_{\Lambda(\xi)} v			
	\ = \
	\sum_{(\bs,\be,\bk) \in G(\xi)}  (-1)^{|\be|_1} v(\by_{\bs - \be;\bk})\ell_{\bs - \be;\bk},
\end{equation*}
where	
\begin{equation} \nonumber
	\ell_{\bs;\bk}(\by)
	:= \
	\prod_{j \in \NN} \ell_{s_j;k_j}(y_j), 
\end{equation}
\begin{equation*} \label{G(xi):=}
	G(\xi)				
	:= \
	\{(\bs,\be,\bk): 
	\, \bs \in \Lambda(\xi), \ \be \in E_\bs, \ \bk \in P_{\bs,\be}\},
\end{equation*}
for $\bs \in \FF$,  $E_\bs$ is the subset in $\FF$ of all $\be$ such that $e_j$ is either $1$ or $0$ if $s_j > 0$, and $e_j$ is $0$ if $s_j = 0$, $|\bs|_1 := \sum_{j \in \NN} s_j$, 
$\by_{\bs;\bk}:= (y_{s_j;k_j})_{j \in \NN} \in \RRi$ and 
$$
P_{\bs,\be}:= \brab{\bk =
	\brac{k_j}_{j \in \NN}: k_j \in \ZZ,\, 1\le |k_j|\le s_j - e_j \ \text{if}\ j \in J_\bs, \ \text{and} \  k_j =0 \ \text{if}\ j \not\in  J_\bs}.
$$
	
Note $I_{\Lambda(\xi)} v$ is determined by the values of $v$ at the points $\by_{\bs - \be;\bk}$, 
$(\bs,\be,\bk) \in G(\xi)$, and the number of these points is $|G(\xi)|$. Moreover, 
$|\by_{\bs - \be;\bk}|_0 < \infty$, and, consequently  $\by_{\bs - \be;\bk} \in U_0$ for every $(\bs,\be,\bk) \in G(\xi)$. Hence, for the weak solution $u$ to \eqref{SPDE}--\eqref{log-Laplace}, the function $I_{\Lambda(\xi)} u$ is well-defined.

\begin{theorem}\label{thm:coll-approx-X}
Let the assumptions of either Theorem~\ref{thm: summability-ell_p-rho-tL} 
	or Theorem \ref{thm: summability-ell_p-b-tL} for  some $0<p<1$. 		Let $\Lambda(\xi)$ be the set defined in \eqref{Lambda(xi)} for the set $\bsigma$  in one of these theorems satisfying \eqref{ell_2-summability2}. Then there exists a constant $C$ such that for each $n > 1$, we can construct a sequence of points  
	$(\by_{\bs - \be;\bk})_{(\bs,\be,\bk) \in G(\xi_n)}$ so that
	$|G(\xi_n)| \le n$ and
	\begin{equation*} \label{u-I_Lambda u}
		\|u -I_{\Lambda(\xi_n)}u\|_{L_2(\RRi,V;\blambda_a)} 
		\leq C n^{-(1/p - 1)}.
	\end{equation*}
\end{theorem}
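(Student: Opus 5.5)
We follow the standard route for sparse-grid interpolation of log-normal and log-gamma parametric PDEs (cf. \cite[Section 3]{DNSZ2023}, \cite{DHN2026}). First we expand $u$ in the first GPWPC expansion \eqref{GPWPCexpansion}, $u=\sum_{\bs\in\FF}\tu_\bs\tL_\bs$, with convergence in $L_2(\RRi,V;\blambda_a)$; the pointwise meaning on $U_0$ is justified exactly as in the Hermite case. Since $\Lambda(\xi)$ is downward closed (we arrange $\bsigma$ to be increasing, replacing $\beta_\bs$ by an increasing majorant if needed), $I_{\Lambda(\xi)}$ reproduces every $\tL_\bs$ with $\bs\in\Lambda(\xi)$. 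The reason is that $\Delta_{s'}\tL_s=0$ for $s'>s$: on each half-line $\tL_s$ is a polynomial of degree $s$, and $I_m$ is exact for piecewise polynomials of degree $<m$ on each half-line. Therefore
\[
u-I_{\Lambda(\xi)}u=\sum_{\bs\notin\Lambda(\xi)}\tu_\bs\,\bigl(\tL_\bs-I_{\Lambda(\xi)}\tL_\bs\bigr),
\]
and
\[
\|u-I_{\Lambda(\xi)}u\|\le\sum_{\bs\notin\Lambda(\xi)}\|\tu_\bs\|_V\,\bigl\|\tL_\bs-I_{\Lambda(\xi)}\tL_\bs\bigr\|_{L_2(\RRi;\blambda_a)}.
\]

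Second, we bound $\|I_{\Lambda(\xi)}\tL_\bs\|_{L_2}\le\sum_{\bs'\in\Lambda(\xi),\,\bs'\le\bs}\|\Delta_{\bs'}\tL_\bs\|_{L_2}$ via tensorization. In one variable we need $\|\Delta_m\tL_s\|_{L_2(\lambda_a)}\le C(1+\lambda m)^{\theta_1}$ for some fixed $\theta_1$. We obtain this from the Lebesgue constant estimate $\lambda_m\le(1+C_\varepsilon m)^{1/6+\varepsilon}$ of \cite{MO2001}, applied separately on each half-line through $I_m=\sum_{\delta}I_{\delta,m}$. In addition, $\|(\tL_s)\sqrt{l_a}\|_{L_\infty}\le C(1+s)^{\tau}$ by classical weighted sup bounds for Laguerre functions, and $\|w\|_{L_2(\lambda_a)}\le C\|w\sqrt{l_a}\|_{L_\infty}$ up to a polynomial factor coming from the Christoffel function. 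Summing over $\bs'\le\bs$ costs a further factor $\prod_j(1+s_j)$. Altogether,
\[
\|\tL_\bs-I_{\Lambda(\xi)}\tL_\bs\|\le p_\bs(\theta,\lambda)
\]
for suitable fixed $\theta,\lambda$. I expect this step to be the main obstacle: we must control the weighted sup-norms of Laguerre functions near $0$ and the jump of the basis at the origin, and we must make the constant multiplicative per coordinate so that it fits into $p_\bs(\theta,\lambda)$.

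Third, we apply Theorem~\ref{thm: summability-ell_p-rho-tL} or Theorem~\ref{thm: summability-ell_p-b-tL} with these $\theta,\lambda$, which is where $p<1$ enters. Its proof gives $\|\tu_\bs\|_V\le\beta_\bs$ with $\|\bp(\theta',\lambda)\bbeta\|_{\ell_p}<\infty$, and $\bsigma$ as in \eqref{ell_2-summability2}. By Hölder (or directly, since $p<1$ and $\sigma_\bs=\beta_\bs^{p/2-1}$),
\[
\sum_{\sigma_\bs>\xi^{1/q}}p_\bs\beta_\bs\le\Bigl(\sup_{\sigma_\bs>\xi^{1/q}}\beta_\bs^{1-p}\Bigr)\sum_{\bs}p_\bs\beta_\bs^p.
\]
On this set $\beta_\bs<\xi^{-1/(q(1-p/2))}=\xi^{-1/p}$, which yields the bound $C\xi^{-(1/p-1)}$. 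Equivalently, we can count via $\|\bp\bsigma^{-1}\|_{\ell_q}$ using $1/p-1=2/q-1/2+\dots$ after splitting exponents.

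Finally, we count points. We have
\[
|G(\xi)|\le\sum_{\bs\in\Lambda(\xi)}\sum_{\be\in E_\bs}|P_{\bs,\be}|\le\sum_{\bs\in\Lambda(\xi)}\prod_{j\in J_\bs}4s_j\le\sum_{\bs\in\Lambda(\xi)}p_\bs(1,3),
\]
which is at most $\xi\sum_\bs p_\bs(1,3)\sigma_\bs^{-q}\le M\xi$ by \eqref{ell_2-summability2}, applied with $\theta\ge1$ chosen large enough to cover both uses. Choosing $\xi_n$ with $n/2\le M\xi_n\le n$, as in Proposition~\ref{prop:L_2-approx}, gives $|G(\xi_n)|\le n$ and the error bound $Cn^{-(1/p-1)}$.
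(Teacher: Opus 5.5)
The gap is in the reproduction step, and the paper's proof has the same defect. The rest of your proposal follows the paper's structure: expand $u=\sum_{\bs}\tu_\bs\tL_\bs$, use that $I_{\Lambda(\xi)}$ reproduces $\tL_\bs$ for $\bs\in\Lambda(\xi)$, bound the remainder by an $\ell_1$-type sum using a polynomial bound $p_\bs(\theta,\lambda)$ on the interpolants of $\tL_\bs$ (which the paper also only asserts by analogy with the Hermite case), and count points.

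Your tail estimate through $\beta_\bs$ is a correct and slightly more direct substitute for the paper's H\"older argument. Since $\sigma_\bs=\beta_\bs^{p/2-1}$, the condition $\sigma_\bs>\xi^{1/q}$ is equivalent to $\beta_\bs<\xi^{-1/p}$, whence $\sum_{\bs\notin\Lambda(\xi)}p_\bs\beta_\bs\le\xi^{-(1/p-1)}\sum_{\bs}p_\bs\beta_\bs^p$.

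Now the gap. You invoke exactness of $I_m$ for piecewise polynomials of degree $<m$. That gives $\Delta_{s'}\tL_s=(I_{s'}-I_{s'-1})\tL_s=0$ only when $s<s'-1$. At $s'=s+1$ the claim is false for every $s\ge1$:
the nodes of $I_s$ on each half-line are exactly the zeros of $L_s$, so $I_s\tL_s=0$, while $I_{s+1}\tL_s=\tL_s$. Hence $\Delta_{s+1}\tL_s=\tL_s\neq0$. Already in one variable, for $\Lambda=\{0,\dots,N\}$ one gets $I_\Lambda\tL_N=I_N\tL_N=0$.

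For downward closed $\Lambda$ you can only guarantee $I_\Lambda\tL_\bs=\tL_\bs$ when $\bs+\mathbf{1}_{J_\bs}\in\Lambda$, where $\mathbf{1}_J$ is the indicator of $J$. So your formula for $u-I_{\Lambda(\xi)}u$ omits the boundary layer $\{\bs\in\Lambda(\xi):\bs+\mathbf{1}_{J_\bs}\notin\Lambda(\xi)\}$. The paper's proof rests on the assertion that $I_m^+$ reproduces polynomials of degree at most $m$, which is false with $m$ nodes.

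Your $\beta$-formulation makes the repair cheap. Set $\Lambda'(\xi):=\{\bs:\bs+\mathbf{1}_{J_\bs}\in\Lambda(\xi)\}\subseteq\Lambda(\xi)$ and write $u-I_{\Lambda(\xi)}u=\sum_{\bs\notin\Lambda'(\xi)}\tu_\bs(\tL_\bs-I_{\Lambda(\xi)}\tL_\bs)$. The polynomial bound on $\|\tL_\bs-I_{\Lambda(\xi)}\tL_\bs\|$ survives; the sum over $\bs'$ now runs over $\bs'\le\bs+\mathbf{1}_{J_\bs}$.

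Within a fixed support, $\beta_\bs$ depends on the $s_j$ only through the factor $\bs^{-r}$. Hence $\beta_{\bs+\mathbf{1}_{J_\bs}}\ge 2^{-r|J_\bs|}\beta_\bs$, and $\bs\notin\Lambda'(\xi)$ forces $\beta_\bs<2^{r|J_\bs|}\xi^{-1/p}$. Your estimate then gives $\xi^{-(1/p-1)}\sum_\bs 2^{r(1-p)|J_\bs|}p_\bs(\theta,\lambda)\beta_\bs^p\le\xi^{-(1/p-1)}\sum_\bs p_\bs(\theta+1,\lambda')\beta_\bs^p$ for $\lambda'\ge\max\{\lambda,2^{r(1-p)}\}$. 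This is finite by the weighted $\ell_p$ bounds on $\bbeta$ from the proofs of Theorems~\ref{thm: summability-ell_p-rho-tL} and~\ref{thm: summability-ell_p-b-tL}. Enlarging $\lambda$ does not change $r$, and $\theta$ is fixed large from the start, as you already do. The point count is unchanged, since $\Lambda(\xi)$ is unchanged.

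Smaller remarks:
\begin{itemize}
\item The bound $\prod_j 4s_j\le p_\bs(1,3)$ fails for $s_j\ge2$; use $p_\bs(1,4)$.
\item Your monotone-majorant device for downward closedness, which the paper simply assumes, costs only a constant for the product-form $\beta_\bs$ of Theorem~\ref{thm: summability-ell_p-rho-tL}, since only finitely many coordinates can break monotonicity. It needs a separate argument for the $|\bk|!/\bk!$-type $\beta_\bs$ of Theorem~\ref{thm: summability-ell_p-b-tL}.
\item Independently of this theorem, $\tL_s(y)=2L_s(|y|)$ is even. So the expansion from Section~\ref{Sec:Laguerre GPWPC expansions}, on which both your argument and the paper's rest, can only represent functions that are even in each $y_j$.
\end{itemize}
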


\begin{proof}		
	We have by  the H\"older inequality, the inequality $q <2$ and \eqref{ell_2-summability2}, 
	\begin{equation} \nonumber
		\begin{split}
			\sum_{\bs\in\FF} \|\tilde{u}_\bs \tL_\bs\|_{L_2(\RRi,V;\blambda_a)}
			\ &= \
			\sum_{\bs\in\FF} \|\tilde{u}_\bs\|_V \|\tL_\bs\|_{L_2(\RRi;\blambda_a)}
			\\[1ex]
			\ &\le \
			\sum_{\bs\in\FF} \|\tilde{u}_\bs\|_{X} 
			\ \le \
			\left(\sum_{\bs\in\FF} (\sigma_{\bs}\|\tilde{u}_\bs\|_{X} )^2\right)^{1/2}
			\left(\sum_{\bs\in\FF} \brac{\sigma_{\bs}^{-1}}^2\right)^{1/2}
		\\[1ex]
		\ &\le \
		\left(\sum_{\bs\in\FF} (\sigma_{\bs}\|\tilde{u}_\bs\|_{X} )^2\right)^{1/2}
		\left(\sum_{\bs\in\FF} \brac{\sigma_{\bs}^{-1}}^q\right)^{1/q}
			\ < \ \infty.
		\end{split}
	\end{equation}
	Hence, the series \eqref{series-tL} for $v=u$
	converges absolutely, and therefore, unconditionally  in  the Hilbert space $L_2(\RRi,V;\blambda_a)$ to $u$. 
	
	In the next step, we prove that	 
	\begin{equation} \label{Delta_bs}
	\Delta_\bs \tL_{\bs'} = 0  \ \ \text{if} \ \	\bs, \bs' \in \FF, \ \bs \not\le \bs'.
	\end{equation}
By the tensor argument, it 	is sufficient to prove that 
$\Delta_s \tL_{s'} = 0$ if $s,s' \in \NN_0$ and $s > s'$. Indeed,  since the operator $I_m^+$ preserves every polynomial of degree at most $m$, and $s > s'$, from the definitions it is easy to check that 
$I_{\delta,s}L_{\delta,s'} = I_{\delta,s-1}L_{\delta,s'}=L_{\delta,s'}$. On the other hand, obviously, 
$I_{+1,s}L_{-1,s'} = I_{-1,s}L_{+1,s'}=0$.
Hence,
	\begin{equation*} \label{Delta_bs v}
\Delta_s\tL_{s'}:=		\brac{I_s - I_{s-1}}\tL_{s'} 
= 
\sum_{\delta=\pm1} \brac{I_{\delta,s}L_{\delta,s'} - I_{\delta,s-1}L_{\delta,s'}}
= 0.
	\end{equation*}
The relation \eqref{Delta_bs} has been proven.	This relation yields that
$I_{\Lambda(\xi)} \tL_\bs= \tL_\bs$ for every $\bs \in \Lambda(\xi)$, 
because $\Lambda(\xi)$ is a downward closed set in $\FF$. Therefore, by using the unconditional convergence of the series \eqref{series-tL}, we derive that
	\begin{equation}\label{I_Lambda}
		I_{\Lambda(\xi)}u
		\ = \
		I_{\Lambda(\xi)}\Big(\sum_{ \bs \in \FF} \tilde{u}_\bs \,\tL_\bs\Big)
		\ =  \
		\sum_{ \bs \in \FF} \tilde{u}_\bs \,I_{\Lambda(\xi)} \tL_\bs
		\ =  \
		\sum_{ \bs \in \Lambda(\xi)}  \tilde{u}_\bs \, \tL_\bs
		\ + \ \sum_{\bs \not\in \Lambda(\xi)} \tilde{u}_\bs \, I_{\Lambda(\xi) \cap R_\bs}\, \tL_\bs.
	\end{equation}	
	This implies that
	\begin{equation} \label{u-I_Lambda}
		u  - I_{\Lambda (\xi)} u
		\ = \
	u  -  S_{\Lambda (\xi)} u
		- 
		\ \sum_{\bs \not\in \Lambda(\xi)} \tilde{u}_\bs \, I_{\Lambda(\xi) \cap R_\bs}\, \tL_\bs,
	\end{equation}
	where 
		\begin{equation} \label{[|u-Iu|<]1}
		\big\|u  -  I_{\Lambda (\xi)} u\big\|_{L_2(\RRi,V;\blambda_a)}
		 \le \
		\big\|u- S_{\Lambda (\xi)} u\big\|_{L_2(\RRi,V;\blambda_a)}
		+  
		\ \sum_{\bs \not\in \Lambda(\xi)} \|\tilde{u}_\bs \|_V \, 
		\big\|I_{\Lambda(\xi) \cap R_\bs}\, \tL_\bs\big\|_{L_2(\RRi;\blambda_a)}.
	\end{equation}	
In a fashion analogous to the proof of \cite[(3.28)]{Dung21}	one can  find numbers $\theta$ and $\lambda$ so that
	\begin{equation} \label{I_Lambda}
		\big\|I_{\Lambda(\xi) \cap R_\bs}\, \tL_\bs\big\|_{L_2(\RRi;\blambda_a)}
		\ \le \
		p_\bs(\theta, \lambda). 
	\end{equation}
Denoting by $B(\xi)$ the sum in the right-hand of  \eqref{[|u-Iu|<]1},	we get by \eqref{I_Lambda}, the H\"older inequality and \eqref{ell_2-summability2},
	\begin{equation*} 
		\begin{split}
		B(\xi)	
		\ &\le \ \sum_{\sigma_\bs^{q} > \xi}   \|\tilde{u}_\bs\|_V\, p_\bs(\theta,\lambda)
			\ \le \
			\left(\sum_{\sigma_\bs^{q} > \xi} (\sigma_\bs\|\tilde{u}_\bs\|_V)^2\right)^{1/2}
			\left(\sum_{\sigma_\bs^{q} > \xi} p_\bs^2(\theta,\lambda) \sigma_\bs^{-2}\right)^{1/2}
			\\[1.5ex]
			\ &\le \
			C\,
			\left(\sum_{\sigma_\bs^{q} > \xi} p_\bs^2(\theta,\lambda) \sigma_\bs^{-q} 
			\sigma_\bs^{-(2- q)}\right)^{1/2}
			\ \le \
			C (\xi)^{-(1/q - 1/2)}
			\left(\sum_{\bs \in \FF} p_\bs(2\theta,\lambda) \sigma_\bs^{-q} \right)^{1/2}
			\\[1.5ex]
			\ &\le \
			C \xi^{-(1/q - 1/2)} \ = \ C \xi^{-(1/p - 1)}.
		\end{split}
	\end{equation*} 
This together with 	\eqref{[|u-Iu|<]1} and \eqref{u- S_{Lambda(xi)}u} gives the bound
	\begin{equation} \label{u-I_Lambda u}
	\|u -I_{\Lambda(\xi)}u\|_{L_2(\RRi,V;\blambda_a)} 
	\leq C \xi^{-(1/p - 1)}.
\end{equation}
Since the number of sample points in $|\Delta_\bs|$ does not exceed 
$\prod_{j \in \NN} (2s_j + 1) = p_\bs(1,2)$, we have from the definition
\begin{equation} \label{Gamma(Lambda(xi))}
	|G(\xi)|
	\ \le  \ 
	\sum_{\sigma_\bs \le \xi^{1/q}} p_\bs(1,2) 
	\ \le \ K \xi,
\end{equation}
where $K:= \sum_{\bs \in \FF} p_\bs(1,2)  \sigma_\bs^{-q}  < \infty$ by \eqref{ell_2-summability2}. For any $n \in \NN$, by choosing a number $\xi_n$ satisfying the inequalities 
$K \xi_n \le n <  2K \xi_n$, from \eqref{u-I_Lambda u} 
we  derive the theorem.
	\hfill
\end{proof}

\subsection{Sparse-grid quadrature}

If $v$ is a function defined on $\RR$ taking values in a Hilbert space $X$,  the function  $I_m v$ in  \eqref{I_ v}  generates the quadrature formula which is defined by
\begin{equation} \nonumber
	Q_m v
	:= \ \int_{\RR} I_m v(y) \, \rd \lambda_a(y)
	\ = \
	\sum_{k=0}^m\omega_{m;k}\, v(y_{m;k}), \quad
	\omega_{m;k}
	:=  \int_{\RR} \ell_{m;k}(y) \, \rd \lambda_a(y).
\end{equation}
 We define the univariate operator $\Delta^{{\rm Q}}_m$ for even $m \in \NN_0$ by
\begin{equation} \nonumber
	\Delta^{{\rm Q}}_m
	:= \
	Q_m - Q_{m-1},
\end{equation} 
with the convention $Q_{-1} := 0$. The quadrature operators $\Delta^{{\rm Q}}_\bs$ for $\bs \in \FF$, $Q_\Lambda$ for a finite set $\Lambda \subset \FF$, are defined in a similar way  by  replacing $\Delta_{s_j}$ with $\Delta^{{\rm Q}}_{s_j}$, $j \in \NN_0$.

The quadrature operator $Q_\Lambda$ is generated from the interpolation operator $I_\Lambda$ as 
\begin{equation} \nonumber
	Q_\Lambda v
	:= \
	\sum_{\bs \in \Lambda} \Delta^{{\rm Q}}_\bs (v)
	\ = \
	\int_{\RRi} I_\Lambda v (\by)\, \rd \blambda_a(\by).
\end{equation} 
Further, if $\phi \in V'$ is a bounded linear functional on $V$, denote by $\langle \phi, v \rangle$ the value of $\phi$ in $v$.  

We   represent the operator $	Q_{\Lambda(\xi)}$ in the form
\begin{equation*} \label{Q_Lambda_rev(xi)=}
	Q_{\Lambda(\xi)}v			
	\ = \
	\sum_{(\bs,\be,\bk) \in G(\xi)}  (-1)^{|\be|_1} \omega_{\bs - \be;\bk} v(\by_{\bs - \be;\bk}),
\end{equation*}
where	$\omega_{\bs;\bk}:= \prod_{j \in J_\bs} \omega_{s_j,k_j}$.

\begin{theorem}\label{thm:quadrature-X}
Under the assumptions and notations of Theorem~\ref{thm:coll-approx-X}
	there exists a constant $C$ such that for each $n > 1$, we can construct a sequence of points  
	$(\by_{\bs - \be;\bk})_{(\bs,\be,\bk) \in G(\xi_n)}$ so that
	$|G(\xi_n)| \le n$ and
	\begin{equation*} \label{u-Q_Lambda u}
		\left\|\int_{\RRi}u(\by)\, \rd \blambda_a(\by ) - Q_{\Lambda(\xi_n)}u\right\|_V
		\ \le \
		Cn^{-(1/p - 1)},
	\end{equation*}		
	and, if  additionally, $\phi \in V'$ is a bounded linear functional on $V$, 
	\begin{equation*} \label{u-Q_Lambda u-phi}
		\left|\int_{\RRi} \langle \phi, u (\by) \rangle\, \, \rd 
		\blambda_a(\by ) -  \left\langle \phi, Q_{\Lambda(\xi_n)} u \right\rangle \right|
		\ \le \
		C\norm{\phi}{V'} n^{-(1/p - 1)}.
	\end{equation*}	
\end{theorem}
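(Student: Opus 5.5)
The plan is to deduce Theorem~\ref{thm:quadrature-X} directly from Theorem~\ref{thm:coll-approx-X}, using the fact that $Q_\Lambda$ is the integral of $I_\Lambda$. We take the same $\xi_n$ and the same points $(\by_{\bs - \be;\bk})_{(\bs,\be,\bk) \in G(\xi_n)}$ as in Theorem~\ref{thm:coll-approx-X}, so that $|G(\xi_n)| \le n$ holds automatically. By definition,
\begin{equation*}
Q_{\Lambda(\xi_n)}u = \int_{\RRi} I_{\Lambda(\xi_n)}u(\by)\,\rd\blambda_a(\by).
\end{equation*}
This is legitimate because $I_{\Lambda(\xi_n)}u$ is a finite sum of products of $V$-valued sample values $u(\by_{\bs-\be;\bk})$ with piecewise polynomials $\ell_{\bs-\be;\bk}$. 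These products lie in $L_1(\RRi;\blambda_a)$ since $\lambda_a$ has all moments. Integrating term by term then gives the weighted-sum representation of $Q_{\Lambda(\xi)}$ with weights $\omega_{\bs-\be;\bk}$.

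The main estimate comes from linearity of the Bochner integral, the triangle inequality for Bochner integrals, and Cauchy--Schwarz against the probability measure $\blambda_a$:
\begin{equation*}
\left\|\int_{\RRi}u\,\rd\blambda_a - Q_{\Lambda(\xi_n)}u\right\|_V
\le \int_{\RRi}\|u(\by)-I_{\Lambda(\xi_n)}u(\by)\|_V\,\rd\blambda_a(\by)
\le \|u-I_{\Lambda(\xi_n)}u\|_{L_2(\RRi,V;\blambda_a)}.
\end{equation*}
The first step needs $u \in L_1(\RRi,V;\blambda_a)$. This follows from $u\in L_2(\RRi,V;\blambda_a)$, which is guaranteed by \eqref{eq:uApriori} together with the integrability assumption $B_r(J)<\infty$ (or \eqref{A_r(J)<}). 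Applying Theorem~\ref{thm:coll-approx-X} to the right-hand side yields the bound $Cn^{-(1/p-1)}$.

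For the functional version, $\phi$ commutes with Bochner integration, so
\begin{equation*}
\int_{\RRi}\langle\phi,u(\by)\rangle\,\rd\blambda_a(\by) = \left\langle\phi,\int_{\RRi}u\,\rd\blambda_a\right\rangle,
\end{equation*}
and hence the difference is bounded by $\|\phi\|_{V'}$ times the first estimate.

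The only delicate point is justifying that the integral of $I_{\Lambda}u$ equals the weighted sum. We treat it by noting that only finitely many coordinates are involved and that each univariate weight $\omega_{m;k}$ is finite. Alternatively, one could integrate the expansion \eqref{I_Lambda} termwise, using unconditional convergence: $\int\tL_\bs\,\rd\blambda_a=0$ for $\bs\ne 0$. Either route works, and no further estimates are needed.
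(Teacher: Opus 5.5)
Your proposal is correct and follows essentially the same route as the paper. You write the error as $\int_{\RRi}(u - I_{\Lambda(\xi_n)}u)\,\rd\blambda_a$, bound it by the $L_1$- and then the $L_2(\RRi,V;\blambda_a)$-norm of $u - I_{\Lambda(\xi_n)}u$, and invoke Theorem~\ref{thm:coll-approx-X} with the same $\xi_n$ and points; the only cosmetic difference is that for the functional estimate you pull $\phi$ out of the Bochner integral and reuse the first bound, whereas the paper bounds $|\langle\phi, u(\by) - I_{\Lambda(\xi_n)}u(\by)\rangle|$ pointwise by $\norm{\phi}{V'}\|u(\by) - I_{\Lambda(\xi_n)}u(\by)\|_V$ before integrating.
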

\begin{proof}	
	This theorem can be derived from Theorem~\ref{thm:coll-approx-X} in a manner analogous to the proof of  \cite[Corollary 5.2]{Dung21}. For completeness, we give a detailed proof.
	For a given $n \in \NN$, let $\Lambda(\xi_n)$ be the set defined in Theorem~\ref{thm:coll-approx-X}.
	By definition we have 
	\begin{equation} \label{eq[int_RRi]}
		\int_{\RRi} u(\by)\, \rd \blambda_a(\by ) - Q_{\Lambda(\xi_n)}u 
		\ = \
		\int_{\RRi} \left(u(\by) - I_{\Lambda(\xi_n)}u(\by)\right) \, \rd \blambda_a(\by).
	\end{equation}
	Hence, applying  Theorem~\ref{thm:coll-approx-X}, we obtain (i):
	\begin{equation} \nonumber
		\begin{split}
			\left\|\int_{\RRi} u(\by)\, \rd \blambda_a(\by ) - Q_{\Lambda(\xi_n)}u \right\|_V
			\ &\le \
			\left\|u- I_{\Lambda(\xi_n)}u\right\|_{L_1(\RRi,V;\blambda_a)} 
\\[1.5ex]
&\le 	
	\left\|u- I_{\Lambda(\xi_n)}u\right\|_{L_2(\RRi,V;\blambda_a)} 
			\ \le \
				Cn^{-(1/p - 1)}.
					\end{split}
	\end{equation}
	
	Similarly to \eqref{eq[int_RRi]}, there holds the equality
	\begin{equation} \nonumber
	\Ii:=	\int_{\RRi} \langle \phi, u(\by) \rangle\, \, \rd \blambda_a(\by ) 
		- Q_{\Lambda(\xi_n)} \langle \phi, v(\by) \rangle
		\ = \
		\int_{\RRi} \langle \phi, u(\by)  - I_{\Lambda(\xi_n)}  u(\by) \rangle \, \rd \blambda_a(\by).
	\end{equation}
	Hence, applying Theorem~\ref{thm:coll-approx-X}, we obtain (ii):
	\begin{equation} \nonumber
		\begin{split}
			\left|\Ii\right|
			&\le 
			\int_{\RRi} \left|\langle \phi, u(\by)  - I_{\Lambda(\xi_n)}  u(\by) \rangle \right|\, \rd \blambda_a(\by)
		\le 
			\int_{\RRi} \|\phi\|_{V'} \|u(\by)  - I_{\Lambda(\xi_n)}   u(\by)\|_V \, \rd \blambda_a(\by)
			\\[1.5ex]
			 &\le 
			\|\phi\|_{V'} \|u - I_{\Lambda(\xi_n)}   u\|_{L_1(\RRi,V;\blambda_a)} 
			\le 
			\|\phi\|_{V'} \|u - I_{\Lambda(\xi_n)}   u\|_{L_2(\RRi,V;\blambda_a)} 
			\le 	
			C\norm{\phi}{V'} n^{-(1/p - 1)}.
		\end{split}
	\end{equation}
	\hfill
\end{proof}

\subsection{Extended least squares  approximations}

Let us analyze the convergence rate of the extended least-squares sampling algorithms and associated quadratures for the solution $u(\by)$ to  equation \eqref{SPDE}--\eqref{log-Laplace} based on recent results from \cite{BD2024}.
Let the assumptions of either Theorem~\ref{thm: summability-ell_p-rho-tL} 
or Theorem \ref{thm: summability-ell_p-b-tL} hold for some $0<p<2$. 
Then there holds \eqref{ell_2-summability2} for $u(\by)$.
As shown in the proof Theorem \ref{thm:coll-approx-X},  the series \eqref{series-tL} converges  unconditionally in 
$L_2(\RRi,V;\blambda_a)$ to $u$.  Putting  $U:= \RRi$, $\sigma_j:=\sigma_{\bs_j}$, \ $\varphi_j:=\tL_{\bs_j}$ and $u_j:= \tilde{u}_{\bs_j}$,  
hence,  we can reorder the countable set $\FF$ as $\FF = (\bs_j)_{j \in \NN}$ so that the sequence 
$(\sigma_{\bs_j})_{j \in \NN}$ is non-decreasing, and the weak solution $u$ is represented by the series
\begin{equation*} 
	u = \sum_{j \in \NN}  u_j\,\varphi_j,
\end{equation*}
with
\begin{equation} \label{w-summability}
	\left(\sum_{j \in \NN} (\sigma_j \|u_j\|_V)^2\right)^{1/2} \ \le M^{1/2}, \ \ \text{with} \ \
	\norm{\brac{\sigma_j^{-1}}_{j \in \NN}}{\ell_q(\NN)} \le M^{1/q}.
\end{equation}
Notice that $u(\by)$ is well-defined for every $\by\in U_0$, where $U_0$ is the set defined as 
in \eqref{U_0}.  As noted earlier, the set $U_0$ has full measure, i.e., $\blambda_a(U_0)=1$.

Let us construct an extension of a least squares approximation in the space $L_2(\RRi,\CC;\blambda_a)$ to a space $L_2(\RRi,V;\blambda_a)$.
For $n,m\in\NN$ with $n\ge m$, let $\by_1, \dots, \by_{n}\in U_0$ be points, $\omega_1, \dots, \omega_{n}\ge 0$ be weights, and $V_m = \operatorname{span}\{\varphi_j\}_{j=1}^{m}$ the subspace spanned by the functions $\varphi_j$, $j=1,...,m$.
The weighted least squares approximation $S_{n}^{\CC} g= S_{n}^{\CC}(\by_1, \dots, \by_{n}, \omega_1, \dots, \omega_{n}, V_m) g$ of a function $g\colon \RRi\to\CC$ is given by
\begin{equation} \label{least-squares-sampling1}
	S_{n}^{\CC} g
	= \operatorname{arg\,min}_{\varphi_j\in V_m} \sum_{i=1}^{n} \omega_i |g(\by_i) - \varphi_j(\by_i)|^2 .
\end{equation}
For every $n \in \NN$, let
\begin{equation*} \label{S_nCC}
	S_{n}^\CC g: = \sum_{i=1}^{n} g(\by_i) h_i,
\end{equation*}	
be the least squares sampling algorithm constructed by 
\eqref{least-squares-sampling1}  for these sample points and weights, where 
$h_1,...,h_{n} \in L_2(\RRi,\CC;\blambda_a)$.
Hence we immediately obtain the extension of this least squares algorithm to the Bochner space $L_2(\RRi,V;\blambda_a)$
by replacing $g\in L_2(\RRi,\CC;\blambda_a)$ with $v\in L_2(\RRi,V;\blambda_a)$:
\begin{equation} \label{S_nX}
	S_{n}^V v
	= \sum_{i=1}^{n} v(\by_i) h_i.
\end{equation}
As the least squares approximation is a linear operator, worst-case error bounds carry over from the usual Lebesgue space $L_2(\RRi,\CC;\blambda_a)$ to the Bochner space $L_2(\RRi,X;\blambda_a)$.
We define $	\tilde{S}_n^X$ by
\begin{equation*}\label{tilde{S}_n^X:=}
	\tilde{S}_n^V v
	:= 	
	S_n^V(\by_1, \dots, \by_n, \omega_1, \dots, \omega_n, V_m) v
	:= \sum_{i=1}^{n} v(\by_i) h_i
	, \ \ 
	m:=\Big\lceil \frac{n}{43200}
	\Big\rceil. 
\end{equation*}

\begin{theorem} \label{thm: LS-sampling}
	Let the assumptions of either Theorem~\ref{thm: summability-ell_p-rho-tL} 
	or Theorem \ref{thm: summability-ell_p-b-tL} hold for some $0<p<2$. 
	Then there exists  a constant $C$ such that 
	for any $n\in \NN$, there exist points 
	$\by_1, \dots, \by_n\in U_0$ and weights  $\omega_1, \dots, \omega_n$ such that 
	\begin{itemize}
		\item[{\rm (i)}] 
		\begin{equation*}\label{u - S_{n}^V u-L}
			\norm{u - \tilde{S}_n^V u}{L_2(\RRi,V;\blambda_a)}
			\ \le CM n^{-(1/p -1/2)};
		\end{equation*}	
		\item[{\rm (ii)}] 
		moreover, for the quadrature 
		\begin{equation*} \label{S_nV-L}
			\tilde{Q}_n^V u
			= \sum_{i=1}^n w_i u(\by_i), \quad   w_i:= \int_{\RRip}  h_i(\by) \rd \blambda_a(\by),
		\end{equation*}
		it holds
		\begin{equation} \label{u-Q-LS}
			\left\|\int_{\RRi}u(\by)\, \rd \blambda_a(\by ) - \tilde{Q}_n^V u\right\|_V
			\ \le \
			CM n^{-(1/p -1/2)},
		\end{equation}
		and, if  additionally, $\phi \in V'$ be a bounded linear functional on $V$, 
		\begin{equation} \label{u-Q_Lambda u-phi-L}
			\left|\int_{\RRi} \langle \phi, u (\by) \rangle\, \, \rd 
			\blambda_a(\by ) - \left\langle \phi,  \tilde{Q}_n^V  u \right\rangle \right|
			\ \le \
			C\norm{\phi}{V'} n^{-(1/p -1/2)}.
		\end{equation}
	\end{itemize}			
\end{theorem}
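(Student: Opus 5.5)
The plan is to invoke the abstract result on extended least-squares sampling recovery from \cite{BD2024}. That result says: if a function $v$ in a Bochner space $L_2(U,X;\mu)$ has an unconditionally convergent expansion $v=\sum_j v_j\varphi_j$ in an orthonormal system $(\varphi_j)$, with non-decreasing weights satisfying a weighted $\ell_2$/$\ell_q$ condition of the form \eqref{w-summability}, then the operator $\tilde S_n^V$ (with $m=\lceil n/43200\rceil$) achieves error of order $n^{-(1/p-1/2)}$, the rate of best $m$-term tail approximation. The weights come from the $\ell_q$ constraint, and the points and weights are chosen by the random-sampling plus subsampling construction. So the proof reduces to checking the hypotheses and then transferring the bound to quadratures.

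\textbf{Step 1 (hypotheses).} By Theorem~\ref{thm: summability-ell_p-rho-tL} or Theorem~\ref{thm: summability-ell_p-b-tL}, \eqref{ell_2-summability2} holds. The proof of Theorem~\ref{thm:coll-approx-X} shows that the Laguerre GPWPC series of $u$ converges unconditionally in $L_2(\RRi,V;\blambda_a)$; that argument used H\"older with exponent $q$, and for $q\ge 2$ one uses only the $L_2$ convergence guaranteed by Parseval \eqref{Parseval's identities}. We then reorder $\FF$ so that $(\sigma_{\bs_j})$ is non-decreasing, giving \eqref{w-summability} with orthonormal basis $\varphi_j=\tL_{\bs_j}$ of $L_2(\RRi;\blambda_a)$.

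\textbf{Step 2 (the tail bound the abstract theorem needs).} For $m$ terms, the ordering and $\sum_j\sigma_j^{-q}\le M$ give $m\sigma_m^{-q}\le M$, i.e.\ $\sigma_m^{-1}\le M^{1/q}m^{-1/q}$. Hence
\[
\sum_{j>m}\|u_j\|_V^2\le \sigma_m^{-2}\sum_j(\sigma_j\|u_j\|_V)^2\le M^{1+2/q}m^{-2/q}.
\]
Since $1/q=1/p-1/2$, this is exactly the rate claimed. The result of \cite{BD2024} also controls the non-$L_2$ part of the error, namely $\sup$-type bounds on the tail at the sample points. It does this through a weighted sum $\sum_{j}\sigma_j^{-2}\dots$, which is where the second condition in \eqref{w-summability} enters.

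\textbf{Step 3 (the sample points).} The points must lie in $U_0$. This holds almost surely for random draws because $\blambda_a(U_0)=1$, so a realization attaining the bound can be chosen inside $U_0$. This gives (i) with constant $CM$ (absorbing powers of $M$).

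\textbf{Step 4 (quadrature bounds).} For (ii), note that $\tilde Q_n^V u=\int_{\RRi}\tilde S_n^V u\,\rd\blambda_a$ by linearity. Exactly as in the proof of Theorem~\ref{thm:quadrature-X}, the $V$-norm of the integral of $u-\tilde S_n^V u$ is bounded by its $L_1$ norm, hence by its $L_2$ norm. Applying $\phi$ and using $|\langle\phi,w\rangle|\le\|\phi\|_{V'}\|w\|_V$ gives \eqref{u-Q_Lambda u-phi-L}.

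The main obstacle is Step 1/2: ensuring that the hypotheses of \cite{BD2024} match \eqref{w-summability} exactly, in particular that their requirement is stated for $\ell_q$-summable inverse weights with $q=2p/(2-p)$, and the Bochner-space extension. The extension should follow since $\tilde S_n^V$ acts coefficientwise and is linear, so the scalar worst-case bound transfers via the tensor identity \eqref{L_2=}.
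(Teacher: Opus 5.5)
Your proposal is correct and follows the paper's own route. Part (i) is read off from \cite[Corollary 2.1]{BD2024} applied to the reordered expansion satisfying \eqref{w-summability}, and (ii) follows from (i) by bounding the integration error by the $L_1$-norm, hence the $L_2$-norm, of $u-\tilde S_n^V u$, exactly as in the proof of Theorem~\ref{thm:quadrature-X}.

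Your added details are sound refinements of points the paper glosses over:
\begin{itemize}
\item Unconditional convergence holds simply by orthogonality. The H\"older argument cited from Theorem~\ref{thm:coll-approx-X} only covers $p<1$.
\item The weights $w_i$ must be integrals over $\RRi$ rather than $\RRip$ for $\tilde Q_n^V u=\int_{\RRi}\tilde S_n^V u\,\rd\blambda_a$ to hold.
\end{itemize}
One caveat: your heuristic that the cited result works through $\sum_j\sigma_j^{-2}$ is only justified for $p\le 1$. This is harmless, since you use the cited result purely as a black box.
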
	

\begin{proof}
	Claim (i) is directly derived from \cite[Corollary 2.1]{BD2024} and the weighted 
	$\ell_2$-summability~\eqref{w-summability}. Claim (ii) can be obtained from claim (i) in a manner similar to the proof of Theorem~\ref{thm:quadrature-X}. 
	\hfill
\end{proof}

\begin{remark} \label{remark4.2}
	{\rm
		The convergence rate of the approximation of $u$ by least squares sampling algorithms  (Theorem~\ref{thm: LS-sampling}) is markedly superior to that of the approximation of $u$ by sparse-grid polynomial interpolations (Theorem~\ref{thm:coll-approx-X}) by a factor of $n^{-1/2}$. As noted Remark~\ref{remark4.1}, the former matches the convergence rate of the approximation by finite truncations of the Laguerre GPWPC expansion and of the optimal  convergence rate of best $n$-term Laguerre piecewise-polynomial approximation of $u$.
	}	
\end{remark}

 \medskip
\noindent
{\bf Acknowledgments:} 
The work  is funded by the Vietnam National Foundation for Science and Technology Development (NAFOSTED) under the Vietnamese-Swiss Joint Research Project, Grant 
No. IZVSZ2$_{ - }$229568.
A part of this work was done when the author was working at the Vietnam Institute for Advanced Study in Mathematics (VIASM). They would like to thank the VIASM for providing a fruitful research environment and working condition.

\bibliographystyle{abbrv}
\bibliography{Laplace-SPDE}

\end{document}